\newtheorem{theorem}{Theorem}[section]
\newtheorem{lemma}[theorem]{Lemma}
\newtheorem{proposition}[theorem]{Proposition}
\theoremstyle{definition}
\newtheorem{definition}{Definition}[section]
\newtheorem{remark}[theorem]{Remark}
\newtheorem*{theoremA}{Theorem A}
\newtheorem*{theoremB}{Theorem B}
\newcommand{\na}{\mathbb{N}}
\newcommand{\re}{\mathbb{R}}
\def\bra#1{\langle {#1} \rangle }
\def\r2n{\re^n\times (\re^n \setminus \{ 0\})}
\def\dbar{\mbox{\setbox0=\hbox{$d$}$d$\kern-.55\wd0\vbox{%
\hrule height.1ex width.75\wd0\kern1.3ex}}}
\date{}
\begin{document}
\title{Remark on characterization of wave front set by
wave packet transform}

\author{Keiichi Kato, Masaharu Kobayashi and Shingo Ito}
\maketitle
\abstract{
In this paper, we give characterization of usual wave front set and wave
front set in $H^s$ in terms of wave packet transform without any
restriction on basic wave packet.}

\section{Introduction}
In this paper, we discuss on characterization of wave front set in terms
of wave packet transform. 
Wave front set is introduced by L. H\"ormander \cite{Hormander-3}, 
which is one of main tools of microlocal analysis in $C^\infty$
category. 
Wave front set of a distribution is a set of singular points of the 
distribution in phase space. 
Such idea to classify the singularities of generalized functions
 ``microlocally'' has been introduced by M. Sato, J. Bros and
 D. Iagolnitzer and L. H\"ormander independently in the 1970s
(Sato-Kawai-Kashiwara \cite{S-K-K-1}, H\"ormander\cite{Hormander-1}, 
\cite{Hormander-2}, Tr\`eves  \cite{Treves-1}).  

The wave packet transform has been introduced by C\'ordoba-Fefferman
 \cite{C-F-1}.  
 Let $f \in {\mathcal S}^\prime({\mathbb R}^n)$ and 
 $\phi \in {\mathcal S}({\mathbb R}^n)\backslash \{0\}$.
Then the wave packet transform $W_\phi f(x,\xi)$ of $f$ with the wave
 packet generated by a function $\phi$ is defined by 
    \begin{equation*}
    W_\phi f(x,\xi) = \int_{{\mathbb R}^n} 
    \overline{ \varphi(y-x)} f(y) e^{-i y \cdot \xi} dy.
    \end{equation*}
We call the above $\phi$  basic wave packet in this paper. 
Wave packet transform is called short time Fourier transform or windowed
 Fourier transform in several literatures. 
We refer to \cite{Grochenig-1} for more details.

G. B. Folland introduced the characterization of wave front set in terms of 
wave packet transform with a 
positive symmetric Schwartz's function as basic wave packet
(\cite[Theorem 3.22]{Folland-1}).
He proposed a question if the same conclusion is still valid
without the restriction of basic wave packet.
T. \=Okaji \cite[Theorem 2.2]{Okaji-1} has given a partial answer 
with a Schwartz's function $\phi$ satisfying $\int x^\alpha \phi dx \ne
0$ for some $\alpha$ as basic wave packet.
Using such basic wave packet, he has also given sufficient condition and
necessity condition which imply that a point belong in $H^s$ wave front set .
In this paper, we give the characterization of $C^\infty$ wave front set
and $H^s$ wave front set with any Schwartz's function 
which is not identically $0$ as basic wave packet.
The following theorems are our main results.

\begin{theorem}
\label{C_inf}
Let $(x_0,\xi_0)\in \r2n$ and $u\in \mathcal{S}'(\re^n)$. 
The following conditions are equivalent.
  \begin{enumerate}
  \item [(i)] $(x_0,\xi_0)\notin WF(u)$
  \item [(ii)]
  There exist $\phi\in \mathcal{S}(\re^n)\setminus{\{0\}}$,
a neighborhood $K$ of $x_0$ and a conic neighborhood $\Gamma$ of
 $\xi_0$ such that  for all $N\in\na$ and $a\ge 1$ there exists
 a constant $C_{N,a}$ satisfying 
    \begin{align*}
    |W_{\phi_\lambda}u(x,\lambda\xi)|\le C_{N,a}\lambda^{-N}
    \end{align*}
 for all $\lambda\ge 1$, $x\in K$ and $\xi\in\Gamma$ with 
 $a^{-1}\le |\xi|\le a$,
    where $\phi_{\lambda}(x)=\lambda^{n/4}\phi (\lambda^{1/2}x)$.
  \item [(iii)]
  There exist a neighborhood $K$ of $x_0$ and a conic neighborhood $\Gamma$ of
 $\xi_0$ such that  for all $N\in\na$ and $a\ge 1$ there exists
 a constant $C_{N,a}$ satisfying 
    \begin{align*}
    |W_{\phi_\lambda}u(x,\lambda\xi)|\le C_{N,a}\lambda^{-N}
    \end{align*}
 for all $\phi\in \mathcal{S}(\re^n)\setminus{\{0\}}$, $\lambda\ge 1$, $x\in K$ and $\xi\in\Gamma$ with 
 $a^{-1}\le |\xi|\le a$,
    where $\phi_{\lambda}(x)=\lambda^{n/4}\phi (\lambda^{1/2}x)$.
  \end{enumerate}
\end{theorem}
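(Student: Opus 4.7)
The implication $(\mathrm{iii})\Rightarrow(\mathrm{ii})$ is immediate: take any single nonzero $\phi$ (e.g.\ a Gaussian). Because $W_\phi u$ is linear in $\phi$, the uniformity asserted in $(\mathrm{iii})$ has to be understood modulo finitely many Schwartz seminorms of $\phi$, so the substance of the theorem lies in $(\mathrm{i})\Rightarrow(\mathrm{iii})$ and $(\mathrm{ii})\Rightarrow(\mathrm{i})$.

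For $(\mathrm{i})\Rightarrow(\mathrm{iii})$, use the definition of wave front set to pick $\chi\in C_c^\infty(\re^n)$ with $\chi\equiv 1$ near $x_0$ such that $\widehat{\chi u}$ decays rapidly in a conic neighborhood $\Gamma_0$ of $\xi_0$; fix $K\Subset\{\chi=1\}$ and $\Gamma\Subset\Gamma_0$, and decompose $u=\chi u+(1-\chi)u$. For the $\chi u$-piece use the Fourier-side representation
\begin{equation*}
W_{\phi_\lambda}(\chi u)(x,\lambda\xi)=(2\pi)^{-n}\lambda^{-n/4}e^{-ix\cdot\lambda\xi}\int\widehat{\chi u}(\omega)\,\overline{\hat\phi\bigl((\omega-\lambda\xi)/\lambda^{1/2}\bigr)}\,e^{ix\cdot\omega}\,d\omega;
\end{equation*}
substitute $\omega=\lambda\omega'$ and split the integration into $\omega'\in\Gamma_0$ (use rapid decay of $\widehat{\chi u}(\lambda\omega')$) versus $\omega'\notin\Gamma_0$ (where $|\omega'-\xi|\ge c$, so $(\omega-\lambda\xi)/\lambda^{1/2}$ has magnitude $\gtrsim\lambda^{1/2}$ and Schwartz decay of $\hat\phi$ provides any negative power of $\lambda$). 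For the $(1-\chi)u$-piece, shrink $K$ so that $\mathrm{dist}(K,\mathrm{supp}(1-\chi))\ge\delta>0$; then $\phi_\lambda(y-x)$ is $O(\lambda^{-N})$ uniformly on $K\times\mathrm{supp}(1-\chi)$, and the tempered pairing with $(1-\chi)u$ is controlled by a fixed finite family of Schwartz seminorms of $\phi$, which gives the $\phi$-uniformity required by $(\mathrm{iii})$.

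For $(\mathrm{ii})\Rightarrow(\mathrm{i})$, pick $\phi$ as in $(\mathrm{ii})$ and $\chi\in C_c^\infty$ with $\chi\equiv 1$ near $x_0$ and $\mathrm{supp}\,\chi\subset K$. Apply the Parseval identity for the wave packet transform to the pair $(u,g)$ with $g(y)=\chi(y)e^{iy\cdot\lambda\xi}$, using the covariance $W_{\phi_\lambda}g(x,\eta)=W_{\phi_\lambda}\chi(x,\eta-\lambda\xi)$ and the substitution $\eta=\lambda\zeta$, to obtain
\begin{equation*}
\widehat{\chi u}(\lambda\xi)=\frac{\lambda^n}{(2\pi)^n\|\phi\|_{L^2}^2}\iint W_{\phi_\lambda}u(x,\lambda\zeta)\,\overline{W_{\phi_\lambda}\chi\bigl(x,\lambda(\zeta-\xi)\bigr)}\,dx\,d\zeta .
\end{equation*}
For $\xi$ in a small cone $\Gamma''\Subset\Gamma$ of prescribed magnitude, split the $\zeta$-integral into Region $A$, where $\zeta\in\Gamma$ and $a^{-1}\le|\zeta|\le a$ (so hypothesis $(\mathrm{ii})$ yields $|W_{\phi_\lambda}u(x,\lambda\zeta)|\le C_{N,a}\lambda^{-N}$), and Region $B$, where by construction $|\zeta-\xi|\ge c>0$. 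In Region $B$, repeated integration by parts on the defining integral of $W_{\phi_\lambda}\chi$ using the oscillatory factor $e^{-iy\cdot\lambda(\zeta-\xi)}$ and the rescaling $w=\lambda^{1/2}(y-x)$ yields
$|W_{\phi_\lambda}\chi(x,\lambda(\zeta-\xi))|\le C_{N,M}\lambda^{-N}(1+|x|)^{-M}(1+|\zeta-\xi|)^{-M}$ for any $N,M$, which dominates the tempered polynomial growth of $W_{\phi_\lambda}u(x,\lambda\zeta)$ and makes the double integral absolutely convergent with $O(\lambda^{-N})$ bound.

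The main obstacle is Region $B$: one must extract decay simultaneously in $\lambda$, in $|\zeta-\xi|$ (to beat the polynomial growth of $W_{\phi_\lambda}u$ in $|\zeta|$), and in $|x|$ (to justify the $x$-integration). The parabolic scaling $\phi_\lambda=\lambda^{n/4}\phi(\lambda^{1/2}\cdot)$ is precisely what makes one integration by parts produce a gain $\lambda^{-1}|\zeta-\xi|^{-2}$ after the change of variables, so iteration supplies joint decay of any order; this decoupling is what frees the argument from the moment condition on $\phi$ present in \=Okaji's version.
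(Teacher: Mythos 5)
Your route is genuinely different from the paper's. For $(\mathrm{i})\Rightarrow(\mathrm{iii})$ the paper reduces to the $L^2$ estimate of Proposition 3.1 (built for the $H^s$ theorem) and then recovers a pointwise bound via the fundamental-theorem-of-calculus device of Section 4; you instead argue pointwise from the start on the Fourier side, combining rapid decay of $\widehat{\chi u}$ in $\Gamma_0$ with Schwartz decay of $\hat\phi$ off the diagonal after the substitution $\omega=\lambda\omega'$. For $(\mathrm{ii})\Rightarrow(\mathrm{i})$ the paper again runs the $L^2$ scheme of Proposition 3.2 --- Taylor expansion of the cutoff, the $I_1,\dots,I_4$ decomposition, and the iteration that enlarges the ball while gaining a factor $\lambda^{-1/2}$ at each step --- and then converts to a pointwise statement; you replace all of this by one application of the Moyal/Parseval inversion identity for the wave packet transform with the test function $g=\chi\,e^{i(\cdot)\cdot\lambda\xi}$. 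Both routes dispense with moment conditions on $\phi$, but for different reasons: in the paper the iteration eventually overcomes whatever vanishing the moments of $\phi$ have, while in your version the inversion formula is normalized by $\|\phi\|_{L^2}^2\neq 0$ rather than by a moment. Your argument is noticeably shorter and makes the $\phi$-dependence (through finitely many Schwartz seminorms) visible; the paper's gets mileage out of reusing the machinery it already built for Theorem \ref{characterization}.

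There is, however, a real gap in the $(\mathrm{ii})\Rightarrow(\mathrm{i})$ part as written. You split only in the $\zeta$-variable into Region $A$ ($\zeta\in\Gamma$, $a^{-1}\le|\zeta|\le a$) and Region $B$ ($|\zeta-\xi|\ge c$), and in Region $A$ you invoke $(\mathrm{ii})$ to bound $|W_{\phi_\lambda}u(x,\lambda\zeta)|\le C_{N,a}\lambda^{-N}$. But $(\mathrm{ii})$ gives this only for $x\in K$, whereas the $x$-integral in the Parseval identity runs over all of $\re^n$, and the integration-by-parts decay of $W_{\phi_\lambda}\chi$ that saves you in Region $B$ gives nothing here because $|\zeta-\xi|$ can vanish. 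You need a further split in $x$: take $\chi$ with $\operatorname{supp}\chi\Subset K$, so that for $x\notin K$ one has $\operatorname{dist}(x,\operatorname{supp}\chi)\ge\delta>0$, and then, by the parabolic scaling of $\phi_\lambda$, $|W_{\phi_\lambda}\chi(x,\lambda(\zeta-\xi))|\le C_{N,M}\lambda^{-N}(1+|x|)^{-M}$ uniformly in $\zeta$; this dominates the tempered polynomial growth of $W_{\phi_\lambda}u$ and closes the case $x\notin K$, $\zeta\in A$. You use exactly this support-separation estimate for the $(1-\chi)u$ term in $(\mathrm{i})\Rightarrow(\mathrm{iii})$, so the missing ingredient is already in your toolkit, but as written the proof of $(\mathrm{ii})\Rightarrow(\mathrm{i})$ does not cover this region of the double integral.
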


\begin{theorem}
\label{characterization}
Let $s\in\re$, $(x_0,\xi_0)\in \r2n$ and $u\in \mathcal{S}'(\re^n)$. 
The following conditions are equivalent.
  \begin{enumerate}
  \item [(i)] $(x_0,\xi_0)\notin WF_{H^s}(u)$
  \item [(ii)]
  There exist $\phi\in \mathcal{S}(\re^n)\setminus{\{0\}}$,
   a neighborhood $K$ of $x_0$ and a neighborhood
   $V$ of $\xi_0$ such that 
      \begin{align}
      \int_1^\infty  \lambda^{n-1+2s}  \int_{V}  \int_{K}
      |W_{\phi_{\lambda}} u(x,\lambda \xi)|^{2} dx d\xi d\lambda <\infty,
      \end{align}
   where $\phi_{\lambda}(x)=\lambda^{n/4}\phi (\lambda^{1/2}x)$.
  \item [(iii)]
  There exist
   a neighborhood $K$ of $x_0$ and a neighborhood
   $V$ of $\xi_0$ such that 
      \begin{align}
      \int_1^\infty  \lambda^{n-1+2s}  \int_{V}  \int_{K}
      |W_{\phi_{\lambda}} u(x,\lambda \xi)|^{2} dx d\xi d\lambda <\infty
      \end{align}
   for all $\phi\in \mathcal{S}(\re^n)\setminus{\{0\}}$, 
    where $\phi_{\lambda}(x)=\lambda^{n/4}\phi (\lambda^{1/2}x)$.
  \end{enumerate}
\end{theorem}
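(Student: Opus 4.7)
I will establish the cycle $(iii)\Rightarrow(ii)\Rightarrow(i)\Rightarrow(iii)$; the first implication is immediate since (iii) is stronger than (ii), so only the two nontrivial directions require work.

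\emph{Direction $(i)\Rightarrow(iii)$: cutoff-and-split.} Since $(x_0,\xi_0)\notin WF_{H^s}(u)$, I choose $\chi\in C_c^\infty(\re^n)$ with $\chi\equiv 1$ on a small neighborhood $K$ of $x_0$ and supported tightly enough that $\int_\Gamma|\widehat{\chi u}(\tau)|^2\langle\tau\rangle^{2s}\,d\tau<\infty$ on some conic neighborhood $\Gamma$ of $\xi_0$. I then split $u=\chi u+(1-\chi)u$. For the second piece, on $x\in K$ the integrand $\overline{\phi_\lambda(y-x)}(1-\chi(y))u(y)$ is supported in $\{|y-x|\ge\delta\}$ for some fixed $\delta>0$, and the Schwartz estimate $|\phi_\lambda(y-x)|\le C_N\lambda^{n/4}(1+\lambda^{1/2}|y-x|)^{-N}$ yields $|W_{\phi_\lambda}((1-\chi)u)(x,\lambda\xi)|\le C_{N,V}\lambda^{-N}$ uniformly in $\xi\in V$, which is integrable against $\lambda^{n-1+2s}$ for $N$ large. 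For the first piece I use the Fourier-side identity
\[
\mathcal{F}_x\bigl[W_{\phi_\lambda}(\chi u)(\cdot,\lambda\xi)\bigr](\eta)=\overline{\widehat{\phi_\lambda}(\eta)}\,\widehat{\chi u}(\lambda\xi+\eta),
\]
Plancherel in $x$, and the substitution $\tau=\lambda\xi+\eta$. Shrinking $V$ so that the essential concentration of $\widehat{\phi_\lambda}(\tau-\lambda\xi)$ near $\tau\approx\lambda\xi$ forces $\tau\in\Gamma$, the weight $\lambda^{n-1+2s}$ combines with the Jacobians from the rescaled variables $\lambda^{-1/2}\tau$, $\lambda^{1/2}\xi$ to reproduce the weight $\langle\tau\rangle^{2s}$, bounding the full triple integral by a multiple of $\int_\Gamma|\widehat{\chi u}(\tau)|^2\langle\tau\rangle^{2s}\,d\tau<\infty$.

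\emph{Direction $(ii)\Rightarrow(i)$: microlocal inversion.} Starting from the scale-$\lambda$ STFT inversion
\[
u(y)=\frac{\lambda^n}{(2\pi)^n\|\phi\|_{L^2}^2}\iint W_{\phi_\lambda}u(x,\lambda\xi)\,\phi_\lambda(y-x)\,e^{iy\cdot\lambda\xi}\,dx\,d\xi,
\]
valid for every $\lambda\ge 1$, I average against a smooth weight in $\lambda$ proportional to $\lambda^{n-1+2s}\,d\lambda$ to obtain a Calderón-type reproducing formula whose natural quadratic form precisely matches the integral in (ii). Truncating the outer integrals to $K\times V\times[1,\infty)$ decomposes $u=u_1+u_2$, where $u_1$ is reconstructed from the phase-space box controlled by (ii) and has Fourier transform in $L^2(\langle\tau\rangle^{2s}\,d\tau)$ on a cone around $\xi_0$, while $u_2$ is microlocally smooth at $(x_0,\xi_0)$ by the Schwartz decay of $\phi$ and $\widehat\phi$ acting on the complementary phase-space region. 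Multiplying by a cutoff near $x_0$ and taking Fourier transform gives the microlocal $H^s$-bound required by (i).

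\textbf{Main obstacle.} The difficult step is $(ii)\Rightarrow(i)$: from a single integral estimate with one arbitrary (and possibly degenerate) basic wave packet $\phi\in\mathcal S(\re^n)\setminus\{0\}$, one must recover a genuine microlocal $H^s$-regularity statement for $u$. The technical core is constructing a reproducing formula adapted to the scaled family $\{\phi_\lambda\}_{\lambda\ge 1}$ which is robust under the choice of $\phi$, and carefully estimating the smoothing error produced by truncating the formula to the phase-space box $K\times V\times[1,\infty)$ so that it remains microlocally smooth at $(x_0,\xi_0)$ regardless of how the window $\phi$ degenerates.
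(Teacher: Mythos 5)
Your overall skeleton is right — $(iii)\Rightarrow(ii)$ is trivial, and one proves $(i)\Rightarrow(iii)$ and $(ii)\Rightarrow(i)$ — and your $(i)\Rightarrow(iii)$ sketch is close in spirit to the paper's: decompose $u=\chi u+(1-\chi)u$, use Schwartz decay of $\phi_\lambda$ against the support gap for the far piece, and the identity $\mathcal{F}_{x\to\eta}\bigl[W_{\phi_\lambda}(\chi u)(\cdot,\lambda\xi)\bigr](\eta)=\overline{\widehat{\phi_\lambda}(\eta)}\widehat{\chi u}(\lambda\xi+\eta)$ plus Plancherel for the near piece. What you gloss over is that $\widehat{\phi_\lambda}$ is not compactly supported, so ``shrinking $V$ to force $\tau\in\Gamma$'' is not an argument; the paper genuinely splits $\eta$ into $|\eta|<d\lambda^{3/4}$ (where $\tau=\lambda\xi+\eta$ stays in the cone and the window energy is $\le\|\phi\|_{L^2}^2$) and $|\eta|\ge d\lambda^{3/4}$ (where Lemma~\ref{hodai 1} gives $\lambda^{-N}$ decay), and also needs a change-of-cutoff lemma (Lemma~\ref{change cut-off}) since the Plancherel step naturally produces $\widehat{\widetilde\chi\chi^2 u}$ rather than $\widehat{\chi u}$. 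These are real steps, not tidying.

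The genuine gap is in $(ii)\Rightarrow(i)$. First, the proposed ``Calderón reproducing formula by averaging the STFT inversion against $\lambda^{n-1+2s}\,d\lambda$'' does not exist as stated: each scale $\lambda$ already reconstructs $u$ exactly, and $\int_1^\infty\lambda^{n-1+2s}\,d\lambda=\infty$, so there is no normalizable average whose ``quadratic form matches (ii).'' You would need a different scaffold (e.g.\ a genuine multi-scale Littlewood--Paley partition), and you do not supply it. Second — and this is the crux the paper actually has to grapple with — even if one writes $u=u_1+u_2$ with $u_1$ built from the controlled phase-space box $K\times V\times[1,\infty)$, the error $u_2$ is \emph{not} automatically microlocally smooth at $(x_0,\xi_0)$: the $x$-truncation to $K$ does not commute with Plancherel, and the correction terms it produces are only $O(\lambda^{-1/2})$ better, not $O(\lambda^{-\infty})$. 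The paper's resolution is entirely different from yours: it never inverts the transform. Instead it proves a \emph{lower} bound on $\int_V\int_K|W_{\phi_\lambda}u|^2$ by Taylor-expanding the spatial cutoff $\widetilde\chi(x)=\widetilde\chi(y)+\sum_\alpha\cdots$, which isolates a main term $I_1$ whose window factor $\int_{|\eta|<\delta\lambda^{3/4}}|\widehat{\phi_\lambda}(\eta)|^2\,d\eta\to\|\phi\|_{L^2}^2>0$ gives the positive lower bound for \emph{any} nonzero $\phi$, while the Taylor correction $I_2$ carries an explicit factor $\lambda^{-1/2}$ and involves $\widehat{(\partial^\alpha\widetilde\chi)\chi u}$ on a slightly larger ball. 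The paper then \emph{iterates} the resulting inequality $N$ times on nested balls $B(\xi_0,\tfrac d2+j\delta)$, accumulating $\lambda^{-N/2}$, until this beats the a priori polynomial growth $\lambda^{2k}$ of a tempered distribution. This bootstrap is the device that both removes the $\varepsilon$-loss in \=Okaji's Theorem B and allows completely arbitrary $\phi$; nothing playing its role appears in your sketch. Without such a mechanism, your decomposition gives only $u_2\in H^{s+1/2}_{\mathrm{loc}}$-type control near $(x_0,\xi_0)$, not the full microlocal smoothness you assert.
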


\begin{remark}
P. G\'erard \cite[Proposition 1.1]{Gerard-1} has shown that
(i) and (ii) are equivalent if $\phi(x)$ is a Gaussian function 
$e^{-|x^2|/2}$ (see also J. M. Delort \cite[Theorem 1.2]{Delort-1}).
\end{remark}

Let us begin with a brief review of the history of characterizations of
wave front sets by the wave packet transform.
The $C^\infty$ wave front set is defined as follows.

\begin{definition} 
($C^\infty$ wave front set)
 Let $(x_0,\xi_0) \in \r2n$ and $u \in {\mathcal S}^\prime (\re^n)$. 
 We say that $(x_0,\xi_0)$ is not in the $C^\infty$ wave front set of
 $u$, denoted by $(x_0,\xi_0) \notin WF(u)$, if and only if there exist
 a function $\chi\in C^\infty_0(\re^n)$ with $\chi \equiv 1$ near $x_0$
 and a conic neighborhood $\Gamma$ of $\xi_0$ such that for
 all  $N\in\na$ there exists $C_N>0$ satisfying
    \begin{align} 
    \label{C-inf type}
    |\mathcal{F}[\chi f](\xi)|\le C_N (1+|\xi|)^{-N}
    \end{align}
for $\xi \in \Gamma$.
\end{definition}
Concerning the $C^\infty$ wave front set, following theorem is known.

\begin{theoremA}
(G. B. Folland \cite[Theorem 3.22]{Folland-1} and T. \=Okaji
 \cite[Theorem 2.2]{Okaji-1}). 
Let $(x_0,\xi_0)\in \r2n$ and $u\in \mathcal{S}'(\re^n)$.
Suppose that $\phi\in\mathcal{S}(\re^n)$ satisfies 
 $\int_{\re^n} x^\alpha \phi (x)dx\neq 0$ for some multi-indices
 $\alpha$ and put $\phi_{\lambda}=\lambda^{n/4}\phi (\lambda^{1/2}x)$.
Then, $(x_0,\xi_0)\notin WF(u)$ if and only if there exist a
 neighborhood $K$ of $x_0$ and a conic neighborhood $\Gamma$ of
 $\xi_0$ such that  for all $N\in\na$ and for all $a\ge 1$ there exists
 a constant $C_{N,a}$ satisfying 
    \begin{align*}
    |W_{\phi_\lambda}u(x,\lambda\xi)|\le C_{N,a}\lambda^{-N}
    \end{align*}
 for $\lambda\ge 1$, $x\in K$ and $\xi\in\Gamma$ with 
 $a^{-1}\le |\xi|\le a$.
\end{theoremA}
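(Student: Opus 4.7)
The proof naturally splits into the two implications.

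For the forward direction, assume $(x_0,\xi_0)\notin WF(u)$. Pick $\chi\in C_0^\infty(\re^n)$ with $\chi\equiv 1$ on a neighborhood of $x_0$ containing $\overline{K}$ and with $\widehat{\chi u}(\eta)$ rapidly decaying on a conic neighborhood $\Gamma_0$ that strictly contains $\Gamma$. Decompose $u=\chi u+(1-\chi)u$. For the singular part, $\mathrm{supp}(1-\chi)$ is separated from $\overline{K}$, so for every $x\in K$ the test function $\overline{\phi_\lambda(\cdot-x)}e^{-i(\cdot)\cdot\lambda\xi}$ has all Schwartz seminorms of order $\lambda^{-N}$ for arbitrary $N$ on $\mathrm{supp}(1-\chi)$; pairing against $(1-\chi)u\in\mathcal{S}'$ then yields the required estimate on $W_{\phi_\lambda}((1-\chi)u)(x,\lambda\xi)$. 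For the regular part one uses the Fourier-side identity
\[
W_{\phi_\lambda}(\chi u)(x,\lambda\xi)=(2\pi)^{-n}\lambda^{n/4}\int_{\re^n}e^{i\lambda^{1/2}x\cdot\omega}\,\overline{\hat\phi(\omega)}\,\widehat{\chi u}(\lambda\xi+\lambda^{1/2}\omega)\,d\omega,
\]
and splits the $\omega$-integration at $|\omega|=\varepsilon\lambda^{1/2}$: on the inner region $\lambda\xi+\lambda^{1/2}\omega$ remains in $\Gamma_0$ for $\varepsilon$ small and $\lambda$ large, so $\widehat{\chi u}$ decays rapidly; on the outer region the Schwartz decay of $\hat\phi$ dominates the polynomial tempered growth of $\widehat{\chi u}$.

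For the converse, the plan is to read off rapid decay of $\widehat{\chi u}(\zeta)$ along the cone from the wave packet inversion formula
\[
u(y)=\frac{1}{(2\pi)^n\|\phi\|_{L^2}^2}\iint W_{\phi_\lambda}u(x,\eta)\,\phi_\lambda(y-x)\,e^{iy\cdot\eta}\,dx\,d\eta
\]
(valid for any fixed $\lambda>0$). Fix $\chi\in C_0^\infty$ with $\chi\equiv 1$ near $x_0$ and $\mathrm{supp}\,\chi\subset K$, multiply by $\chi(y)$, take the Fourier transform in $y$, and substitute $\eta=\lambda\xi'$ with $\lambda:=|\zeta|$ chosen dynamically; this expresses $\widehat{\chi u}(\zeta)$ as a double integral in $(x,\xi')$ of $W_{\phi_\lambda}u(x,\lambda\xi')$ against an explicit kernel built from $\phi_\lambda$ and $\chi$. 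Decompose the $(x,\xi')$-domain into a diagonal piece ($x\in K$, $\xi'\in\Gamma$, $|\xi'|\sim 1$, where the hypothesis directly yields $\lambda^{-N}$) and its complement (where either $|\zeta-\lambda\xi'|\gtrsim\lambda$ or $|x-x_0|\ge c$, so repeated integration by parts in the kernel or Schwartz decay of $\chi(\cdot)\phi_\lambda(\cdot-x)$ absorbs the tempered growth $|W_{\phi_\lambda}u(x,\lambda\xi')|\le C(1+\lambda|\xi'|+|x|)^{N_0}$).

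The main obstacle is the quantitative bookkeeping in the off-diagonal regions of the reverse direction, since the kernel, the wave packet transform, and the a priori bounds all scale non-trivially in $\lambda$. The moment condition $\int x^\alpha\phi(x)\,dx\ne 0$ enters precisely here: it is equivalent to $\partial^\alpha\hat\phi(0)\ne 0$, so $\hat\phi$ is not flat at the origin, and the reconstruction — effectively a division by $\hat\phi$ at a point of nonvanishing (reflected in the identity $\mathcal{F}_x[W_{\phi_\lambda}u(\cdot,\lambda\xi)](\tau)=\lambda^{-n/4}\overline{\hat\phi(\lambda^{-1/2}\tau)}\hat u(\lambda\xi+\tau)$) — remains quantitative. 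Without this hypothesis $W_{\phi_\lambda}u$ could in principle probe $\hat u$ near $\lambda\xi$ only through weights vanishing to infinite order, and the converse implication could fail.
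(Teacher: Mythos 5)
Your forward direction is essentially the Folland--\=Okaji argument and is sound: split $u=\chi u+(1-\chi)u$, discard $(1-\chi)u$ using the rapid decay of $\phi_\lambda(\cdot-x)$ away from the support of $1-\chi$, and for $\chi u$ use the Fourier-side formula together with the $|\omega|\lessgtr\varepsilon\lambda^{1/2}$ dichotomy; note this part works for any Schwartz $\phi$ and does not need the moment hypothesis.

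Your converse, however, is a plan rather than a proof, and the obstacle you flag is precisely where the proof lives, not a routine nuisance. After substituting $\eta=\lambda\xi'$ into the inversion formula and inserting $\chi(y)$, the $\eta$-integral still runs over all of $\re^n$, and you must defeat the a~priori tempered bound on $W_{\phi_\lambda}u(x,\eta)$ in the region where $\eta/\lambda$ leaves the slab $\{\xi'\in\Gamma,\ a^{-1}\le|\xi'|\le a\}$ by off-diagonal decay of a kernel that itself carries positive powers of $\lambda$; you gesture at integration by parts but do not carry out the scaling bookkeeping. You also misplace the role of the moment condition: the inversion formula holds for every $\phi\in L^2\setminus\{0\}$, so no ``division by $\hat\phi$'' occurs, and nothing in your outlined diagonal/off-diagonal split visibly uses $\int x^\alpha\phi\,dx\ne0$. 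As written, the converse is missing both the quantitative off-diagonal estimate and any genuine use of the stated hypothesis on $\phi$.

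It is worth knowing that the paper does not reprove Theorem A; it cites it and instead establishes the strictly stronger Theorem \ref{C_inf} (all $\phi\in\mathcal{S}\setminus\{0\}$, no moment condition) by a quite different converse argument. The pointwise bound is first reduced to an $L^2_{x,\xi}$ estimate via the fundamental theorem of calculus applied to $\chi_1(x)\chi_2(\xi)W_{\phi_\lambda}u(x,\lambda\xi)$, which converts $\partial_{x_j}$ into $\lambda^{1/2}W_{(\partial_{x_j}\phi)_\lambda}$ and $\partial_{\xi_j}$ into $\lambda W_{\phi_\lambda}[y_ju]$. Then Plancherel in $x$ applied to $\widetilde\chi(x)W_{\phi_\lambda}[\chi^2u]$, followed by a Taylor expansion of $\widetilde\chi(x)$ around $y$, yields pieces $I_1,I_2,I_3$. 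The decisive step is an $L^2$ lower bound on $I_1$: since $\int_{|\eta|\le\delta\lambda^{3/4}}|\mathcal{F}[\phi_\lambda](\eta)|^2\,d\eta\to\|\phi\|_{L^2}^2>0$, $I_1$ dominates $\|\mathcal{F}[\widetilde\chi\chi^2u](\lambda\cdot)\|_{L^2}$ from below for $\lambda$ large using only $\phi\ne0$; the $I_2$ terms are of the same shape but come with a $\lambda^{-1/2}$ gain and are absorbed by iteration, and $I_3$ is negligible by Lemma~\ref{remainder}. This $L^2$-energy argument is exactly what renders the moment condition superfluous and is the conceptual upgrade over the pointwise-reconstruction route you sketch.
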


G. B. Folland \cite{Folland-1} has shown that the conclusion follows if
 $\phi\in\mathcal{S}(\re^n)$ is a positive symmetric Schwartz's function.
In T. \=Okaji \cite{Okaji-1}, the proof of Theorem A is given.
T. \=Okaji \cite{Okaji-1} also discuss the $H^s$ wave front set.
The $H^s$ wave front set is defined as follows.

\begin{definition} 
\label{Sobolev def}
($H^s$ wave front set)
Let  $s\in \re$, $(x_0,\xi_0) \in \r2n$ and 
 $u \in {\mathcal S}^\prime (\re^n)$. 
We say that $(x_0,\xi_0)$ is not in the $H^s$ wave front set of $u$,
 denoted by $(x_0,\xi_0) \notin WF_{H^s}(u)$, if and only if there exist
 a conic neighborhood $\Gamma$ of $\xi_0$ and a function
 $\chi \in C^\infty_0(\re^n )$ with $\chi \equiv 1$ near 
 $x_0$ such that 
    \begin{align}\label{Sobolev type}
    \|\bra{\xi}^s \widehat{\chi u}(\xi)\|_{L^2(\Gamma)}< +\infty .
    \end{align}
\end{definition}

Concerning the $H^s$ wave front set, following theorem is known.

\begin{theoremB}
(T.\=Okaji \cite[Theorem 2.4]{Okaji-1})
Let $s\in\re$, $(x_0,\xi_0)\in \r2n$ and $u\in \mathcal{S}'(\re^n)$. 
Suppose that $\phi \in\mathcal{S}(\re^n)$ satisfies 
 $\int_{\re^n}\phi (x)dx\neq 0$ and put
 $\phi_{\lambda}(x)=\lambda^{n/4}\phi (\lambda^{1/2}x)$. 
If there exist a neighborhood $K$ of $x_0$ and a neighborhood
   $V$ of $\xi_0$ such that 
      \begin{align}
     \label{Okaji-1}
      \int_1^\infty  \lambda^{n-1+2s}  \int_{V}  \int_{K}
      |W_{\phi_{\lambda}} u(x,\lambda \xi)|^{2} dx d\xi d\lambda <\infty
      \end{align}
 then $(x_0,\xi_0)\notin WF_{H^s}(u)$. Conversely, if 
 $(x_0,\xi_0)\notin WF_{H^s}(u)$ then there exist a neighborhood $K$ of
 $x_0$ and a neighborhood $V$ of $\xi_0$ such that 
      \begin{align}
      \label{Okaji-2}
      \int_1^\infty  \lambda^{n-1+2s-\varepsilon}  \int_{V}  \int_{K}
      |W_{\phi_{\lambda}} u(x,\lambda \xi)|^{2} dx d\xi d\lambda <\infty
      \end{align}
 for all $\varepsilon >0$.
\end{theoremB}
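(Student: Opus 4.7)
The plan is to derive both implications of Theorem~B from a Plancherel-type identity expressing the wave-packet integral as a frequency-side integral of $|\widehat{\chi u}(\eta)|^2$ against an explicit weight $\Phi_\sigma(\eta)$, after a pseudolocality reduction. \emph{Step 1 (Pseudolocality).} Pick $\chi\in C_0^\infty(\re^n)$ with $\chi\equiv 1$ on a neighborhood $K_1\Subset K$ of $x_0$ and $\mathrm{supp}\,\chi\Subset K$. For $x\in K_1$, integration by parts and the rapid decay of $\phi_\lambda$ combined with the seminorm bound for $u$ give $|W_{\phi_\lambda}((1-\chi)u)(x,\lambda\xi)|\le C_N\lambda^{-N}$ for every $N$; for $x$ outside $\mathrm{supp}\,\chi$, the compact support of $\chi u$ and the $\lambda^{-1/2}$-concentration of $\phi_\lambda(\cdot-x)$ give both rapid $\lambda$- and $|x|$-decay of $|W_{\phi_\lambda}(\chi u)(x,\lambda\xi)|$.

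\emph{Step 2 (Plancherel identity and weight analysis).} Writing $W_{\phi_\lambda}(\chi u)(x,\lambda\xi)=e^{-ix\cdot\lambda\xi}((\chi u)\ast\psi_{\lambda,\xi})(x)$ with $\psi_{\lambda,\xi}(z)=\overline{\phi_\lambda(-z)}e^{iz\cdot\lambda\xi}$, Plancherel in $x$, the substitution $\omega=(\eta-\lambda\xi)/\lambda^{1/2}$ in $\xi$, and Fubini give
\begin{equation*}
\int_1^\infty\lambda^{n-1+2\sigma}\int_V\int_{\re^n}|W_{\phi_\lambda}(\chi u)(x,\lambda\xi)|^2\,dx\,d\xi\,d\lambda = (2\pi)^{-n}\int_{\re^n}|\widehat{\chi u}(\eta)|^2\,\Phi_\sigma(\eta)\,d\eta,
\end{equation*}
where $\Phi_\sigma(\eta)=\int_1^\infty\lambda^{2\sigma-1}\int_{\{\omega\,:\,\eta/\lambda-\omega/\lambda^{1/2}\in V\}}|\hat\phi(\omega)|^2\,d\omega\,d\lambda$. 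On the cone generated by any precompact $V'\Subset V$, parameterizing $\lambda=|\eta|t$ and using the continuity of $\hat\phi$ together with the hypothesis $\hat\phi(0)=\int\phi\ne 0$ one obtains $\Phi_\sigma(\eta)\asymp|\eta|^{2\sigma}$; off a slightly larger cone $\Gamma\supset\mathrm{cone}(V)$, the uniform bound $|\eta-\lambda\xi|\gtrsim\max(|\eta|,\lambda)$ for $\xi\in V$ and the rapid decay of $\hat\phi$ give $\Phi_\sigma(\eta)=O(\langle\eta\rangle^{-N})$ for all $N$.

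\emph{Step 3 (Assembly).} For sufficiency, Step~1 reduces (\ref{Okaji-1}) to controlling $\int_{\re^n}|W_{\phi_\lambda}(\chi u)|^2dx$ by $\int_K|W_{\phi_\lambda}u|^2dx$ modulo $O(\lambda^{-N})$; the identity then gives $\int|\widehat{\chi u}|^2\Phi_s\,d\eta<\infty$, and the lower bound $\Phi_s\gtrsim|\eta|^{2s}$ on $\mathrm{cone}(V')$ yields $(x_0,\xi_0)\notin WF_{H^s}(u)$. For necessity, given $\chi,\Gamma$ realizing the wave-front condition and $V$ with $\mathrm{cone}(V)\Subset\Gamma$, running the identity with $\sigma=s-\varepsilon/2$ gives $\Phi_{s-\varepsilon/2}\lesssim\langle\eta\rangle^{2s-\varepsilon}\le\langle\eta\rangle^{2s}$ on $\Gamma$---integrable against $|\widehat{\chi u}|^2$ by hypothesis---while off $\Gamma$ the rapid decay of $\Phi_{s-\varepsilon/2}$ absorbs the polynomial growth of $\widehat{\chi u}$. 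The main obstacles lie in Steps 1--2: first, reconciling the spatial cutoff $\int_K$ in the hypothesis with the $\int_{\re^n}$ demanded by Plancherel (the transition region $K\setminus K_1$ requires care), and second, the sharp lower bound $\Phi_\sigma\gtrsim|\eta|^{2\sigma}$ on the cone, which essentially uses $\int\phi\ne 0$. The $\varepsilon$-loss in (\ref{Okaji-2}) absorbs both a loose handling of the transition region and the imperfect pointwise comparability of $\Phi_\sigma$ near the cone boundary; removing these losses and the moment condition is precisely the improvement achieved in Theorem~\ref{characterization} of this paper.
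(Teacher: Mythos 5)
The paper does not actually prove Theorem~B --- it is quoted from \=Okaji --- so the relevant comparison is with the proof of the stronger Theorem~\ref{characterization} in Section~3. Your route is genuinely different from that one: you apply Plancherel in $x$ to the \emph{whole} of $\re^n$, integrate out $\xi$ and $\lambda$, and reduce everything to pointwise bounds on an explicit weight $\Phi_\sigma(\eta)$; the paper instead inserts a spatial cutoff $\widetilde{\chi}(x)$ under the $x$-integral, Taylor-expands $\widetilde{\chi}(x)$ around $y$ (formula \eqref{1.2-5}), and, for the converse implication, runs an iteration to absorb the derivative terms. Your necessity half (wave front condition $\Rightarrow$ \eqref{Okaji-2}) is sound in outline: one simply bounds $\int_K$ by $\int_{\re^n}$, splits $u=\chi u+(1-\chi)u$ with $K$ compactly contained in $\{\chi=1\}^\circ$, and uses the upper bound on $\Phi_\sigma$ on the cone together with its rapid decay off the cone.

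The gap is in the sufficiency half, and it is exactly the point you flag as an ``obstacle'' and then dismiss. To feed \eqref{Okaji-1} into your Plancherel identity you must dominate $\int_{\re^n}|W_{\phi_\lambda}(\chi u)(x,\lambda\xi)|^2dx$ by $\int_K|W_{\phi_\lambda}u(x,\lambda\xi)|^2dx+O(\lambda^{-N})$. Pseudolocality gives $|W_{\phi_\lambda}((1-\chi)u)(x,\lambda\xi)|=O(\lambda^{-N})$ only for $x$ at positive distance from ${\rm supp}(1-\chi)$; on the transition annulus where $0<\chi<1$ (and just outside ${\rm supp}\,\chi$), neither $W_{\phi_\lambda}(\chi u)$ nor $W_{\phi_\lambda}((1-\chi)u)$ is small, and the triangle inequality only bounds $|W_{\phi_\lambda}(\chi u)|$ by $|W_{\phi_\lambda}u|$ \emph{plus} the uncontrolled term $|W_{\phi_\lambda}((1-\chi)u)|$. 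No choice of nested cutoffs escapes this: if $\widetilde{\chi}$ is supported in $\{\chi=1\}^\circ$ then $(1-\widetilde{\chi})W_{\phi_\lambda}(\chi^2u)$ is not small, and if $\widetilde{\chi}\equiv1$ near ${\rm supp}\,\chi$ then $\widetilde{\chi}W_{\phi_\lambda}((1-\chi^2)u)$ is not small. Crucially, you propose to absorb this loss into the $\varepsilon$ of \eqref{Okaji-2}, but \eqref{Okaji-2} is the \emph{converse} implication; the hypothesis \eqref{Okaji-1} of the sufficiency direction carries no $\varepsilon$, so there is no slack available where you need it. This is precisely the deadlock that the paper's Taylor-expansion device is built to break: writing $\widetilde{\chi}(x)=\widetilde{\chi}(y)+\sum_\alpha\cdots(x-y)^\alpha$ lets one keep the $x$-integration inside $K$ while converting the main term into $\mathcal{F}[\phi_\lambda](\eta)\,\mathcal{F}[\widetilde{\chi}\chi^2u](\eta+\lambda\xi)$, the corrections gaining factors $\lambda^{-|\alpha|/2}$ from the pairing of $(x-y)^\alpha$ with $\phi_\lambda(y-x)$. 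Without that (or an equivalent device), your Step~3 sufficiency argument does not close. A minor further remark: your lower bound $\Phi_\sigma(\eta)\gtrsim|\eta|^{2\sigma}$ on the cone does not really need $\widehat{\phi}(0)\neq0$, since the $\omega$-region has radius $\sim\lambda^{1/2}\sim|\eta|^{1/2}$ and eventually captures all of $\|\widehat{\phi}\|_{L^2}^2$; this is consistent with the paper's removal of the moment condition, but it does not repair the gap above.
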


\begin{remark}
There is a small gap between \eqref{Okaji-1} and \eqref{Okaji-2}.
In Theorem \ref{characterization}, we remove this gap ($\varepsilon =0$)
and weaken the condition of $\phi$.
\end{remark}

\begin{remark}
S. Pilipovi\'{c}, N. Teofanov and J. Toft\cite{P-T-T-1}, \cite{P-T-T-2} 
 treat wave front set in Fourier-Lebesgue space in terms of 
short time Fourier transform, which is called wave packet transform 
in this paper. 
\end{remark}
This paper is organized as follows. In Section 2, we prepare several
propositions and lemmas.
In Section 3, we prove Theorem \ref{characterization}.
In Section 4, we prove Theorem \ref{C_inf} by the method which is used
in section 3.

\section{Preliminary}
\subsection{Notations}
For $x\in \re^n$ and $r>0$, $B(x,r)$ stands $\{ y\in\re^n ; |y-x|<r\}$.
$\mathcal{F}[f](\xi)=\widehat{f}(\xi)=\int_{\re^n}f(x)e^{-ix\cdot\xi}dx$
is the Fourier transform of $f$.
For a subset $A$ of $\re^n$, we denote the complement of $A$ by $A^c$,  
the set of all interior points of $A$ by $A^\circ$ and  the closure of
$A$ by $\overline{A}$.
$\mbox{\boldmath $1$}_A$
 is the characteristic function of $A$, that is, 
$\mbox{\boldmath $1$}_A=1$ for $x\in A$ and 
$\mbox{\boldmath $1$}_A=0$ for $x\in A^c$.
Throughout this paper, $C$, $C_i$, $C'$ and $C'_i$ ($i=1,2,3,\ldots $) serve
as positive constants, 
if the precise value of which is not needed and $C_{\phi}$ denote 
positive constants depending on $\phi$.\\

\subsection{Key Proposition and Lemmas}
First, we prepare the following proposition and lemma. 
Proposition \ref{2.1} is used in section 4
and from Proposition \ref{iikae_1} to Lemma \ref{remainder}
is used in section 3.
Proposition \ref{2.1}, \ref{iikae_1} and Lemma 
\ref{change cut-off} are easy to prove by the standard
method, but we give the proof for reader's convenience in Appendix.

\begin{proposition}
\label{2.1}
Let $(x_0,\xi_0)\in \r2n$ and $u\in \mathcal{S}'(\re^n)$. 
Assume that $\chi \in C^\infty_0(\re^n)$ satisfies $\chi \equiv 1$
 near $x_0$. 
Then, the following conditions are equivalent.
  \begin{enumerate}
  \item [(i)]
  There exists a conic neighborhood $\Gamma$ of $\xi_0$ such that
   for all $N\in\na$ there exists $C_N>0$ satisfying
      \begin{align*}
      |\mathcal{F}[\chi u](\xi)|\le C_N(1+|\xi|)^{-N}
      \end{align*}
  for $\xi\in\Gamma$.
  \item [(ii)]
  There exists a neighborhood $V$ of $\xi_0$ such that
   for all $N\in\na$ there exists $C_N>0$ satisfying
      \begin{align*}
      | \mathcal{F}[\chi u](\lambda\xi)| \le C_N (1+\lambda
       |\xi|)^{-N}
      \end{align*}
for $\xi\in V$ and $\lambda\ge 1$.
  \end{enumerate}
\end{proposition}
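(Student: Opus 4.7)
The plan is to show the equivalence by translating between a conic parameterization of a neighborhood of $\xi_0$ and a ``radius times direction'' parameterization, exploiting the fact that $\chi u \in \mathcal{E}'(\re^n)$ and so $\mathcal{F}[\chi u]$ is a $C^\infty$ function of polynomial growth (in particular bounded on every compact set).

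For (i) $\Rightarrow$ (ii), I would simply take $V$ to be a small open ball centered at $\xi_0$, contained in $\Gamma$ and not containing the origin; this is possible since $\xi_0 \ne 0$ and $\Gamma$ is an open cone. Because $\Gamma$ is conic, $\lambda \xi \in \Gamma$ for every $\xi \in V$ and every $\lambda \ge 1$, so applying the hypothesis of (i) at the point $\lambda \xi$ gives
$$|\mathcal{F}[\chi u](\lambda \xi)| \le C_N (1+|\lambda \xi|)^{-N} = C_N (1+\lambda|\xi|)^{-N},$$
which is exactly (ii).

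For (ii) $\Rightarrow$ (i), the key step is to manufacture a conic neighborhood $\Gamma$ of $\xi_0$ in which every sufficiently large point decomposes cleanly as $\eta = \lambda \xi$ with $\xi \in V$ and $\lambda \ge 1$. The convenient choice is $\Gamma := \{\eta \ne 0 : |\xi_0|\eta/|\eta| \in V\}$, which is open, conic, and contains $\xi_0$. For $\eta \in \Gamma$ with $|\eta| \ge |\xi_0|$, I would set $\xi := |\xi_0|\eta/|\eta| \in V$ and $\lambda := |\eta|/|\xi_0| \ge 1$, so that $\eta = \lambda \xi$ and $\lambda|\xi| = |\eta|$; then (ii) immediately gives $|\mathcal{F}[\chi u](\eta)| \le C_N (1+|\eta|)^{-N}$. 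For the remaining $\eta \in \Gamma$ with $|\eta| < |\xi_0|$, I would bound $|\mathcal{F}[\chi u](\eta)|$ by its supremum over the compact set $\{|\eta| \le |\xi_0|\}$ and use that $(1+|\eta|)^{-N}$ is bounded below there by a positive constant, absorbing the result into a larger $C_N$.

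I do not expect any genuine obstacle here: both implications are essentially geometric, relying only on continuity and polynomial growth of $\mathcal{F}[\chi u]$. The only real judgment call is the choice of $\Gamma$ in the (ii) $\Rightarrow$ (i) direction, and the normalization by $|\xi_0|$ is useful precisely because it arranges $\lambda |\xi| = |\eta|$ exactly, so that the decay estimate transfers without losing a factor in the exponent $N$.
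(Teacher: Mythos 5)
Your proof is correct and follows essentially the same route as the paper's: in the forward direction, pick $V$ a small ball in $\Gamma$ and use that $\Gamma$ is conic; in the converse, build a conic $\Gamma$ from $V$, obtain the decay at large $|\eta|$ by rescaling into $V$, and absorb the bounded remainder using that $\mathcal{F}[\chi u]$ is bounded on compact sets. The paper phrases the converse via the sets $\Gamma_1=\{\lambda\xi:\xi\in V,\lambda>0\}$ and $\Gamma_2=\{\lambda\xi:\xi\in V,\lambda\ge1\}$ and a cutoff $\chi_1$ supported near $\Gamma_1\setminus\Gamma_2$, which is the same large-$|\eta|$ versus bounded-$|\eta|$ split you make explicit with the threshold $|\eta|\ge|\xi_0|$.
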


\medskip

\begin{proposition}
\label{iikae_1}
Let $s\in\re$, $(x_0,\xi_0)\in \r2n$ and $u\in \mathcal{S}'(\re^n)$. 
Assume that $\chi \in C^\infty_0(\re^n)$ satisfies $\chi \equiv 1$ 
near $x_0$. Then, the following conditions are equivalent.
  \begin{enumerate}
  \item [(i)]
  There exists a conic neighborhood $\Gamma$ of $\xi_0$ such that
      \begin{align}
      \label{2.2_1}
      \|\bra{\xi}^s\mathcal{F}[\chi u](\xi)\|_{L^2(\Gamma)}<\infty .
      \end{align}
  \item [(ii)]
  There exists a neighborhood $V$ of $\xi_0$ such that 
      \begin{align}
      \label{2.2_2}
      \int_1^\infty  \lambda^{n-1+2s} 
      \| \mathcal{F}[\chi u](\lambda\xi)\|^2_{L^2(V)}d\lambda <\infty.
      \end{align}
  \end{enumerate}
\end{proposition}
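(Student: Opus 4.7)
The plan is to reduce both conditions to weighted $L^2$ integrals of the smooth tempered function $f := \mathcal{F}[\chi u]$ over essentially the same conic region around $\xi_0$, by converting the $(\lambda,\xi)$-integral of (ii) into an $\eta$-integral via $\eta=\lambda\xi$, and then compare directly with (i). Note that $f$ is smooth and of polynomial growth since $\chi u\in\mathcal{E}'(\re^n)$.

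Starting from (ii), I would fix $\lambda$ and substitute $\eta = \lambda\xi$, giving $d\xi = \lambda^{-n}\,d\eta$, and then apply Tonelli's theorem (everything is nonnegative) to interchange the order:
\begin{equation*}
\int_1^\infty \lambda^{n-1+2s}\!\int_V|f(\lambda\xi)|^2\,d\xi\,d\lambda
=\int_{\re^n}|f(\eta)|^2\,\Phi_V(\eta)\,d\eta,\quad
\Phi_V(\eta):=\!\int_{\{\lambda\ge 1\,:\,\eta/\lambda\in V\}}\!\!\lambda^{2s-1}\,d\lambda.
\end{equation*}
Condition (i) is literally $\int_\Gamma|f(\eta)|^2\langle\eta\rangle^{2s}\,d\eta<\infty$, so the two finiteness assertions differ only in the weights $\Phi_V(\eta)$ versus $\langle\eta\rangle^{2s}\mathbf{1}_\Gamma(\eta)$.

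The main work is a geometric comparison of these weights. Taking $V=B(\xi_0,\delta)$ with $\delta<|\xi_0|$ and writing $\eta=r\omega$ in polar form, the constraint $|(r/\lambda)\omega-\xi_0|<\delta$ becomes a quadratic inequality in $r/\lambda$ that cuts out an interval in $\lambda$ of length comparable to $r$ and centred at $\lambda\approx r/|\xi_0|$, non-empty precisely when $\omega$ lies in an explicit angular neighborhood of $\xi_0/|\xi_0|$. Integrating $\lambda^{2s-1}$ over this interval yields, for $r$ large, $\Phi_V(\eta)=C(\omega)\,r^{2s}$ with $C(\omega)$ uniformly positive on a smaller angular neighborhood (the $s=0$ case handled by the logarithmic primitive, which is still positive and bounded). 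Hence there exist a conic neighborhood $\Gamma_V$ of $\xi_0$, a radius $R>0$, and constants $c,C>0$ with $\Phi_V\le C\,\langle\eta\rangle^{2s}$ on its (conic) support and $\Phi_V\ge c\,|\eta|^{2s}$ on $\Gamma_V\cap\{|\eta|\ge R\}$.

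With this lemma the two implications follow easily. For (i) $\Rightarrow$ (ii), I would shrink $V$ so that $\mathrm{supp}\,\Phi_V\subset\Gamma$ and apply the upper bound, controlling the bounded-$|\eta|$ contribution by continuity of $f$. For (ii) $\Rightarrow$ (i), I would take a smaller ball $V_0\subset V$, set $\Gamma:=\Gamma_{V_0}$, and use the lower bound on $\Phi_{V_0}$ to dominate the (i)-integral on $\Gamma\cap\{|\eta|\ge R\}$ by the (ii)-integral for $V_0$, which is itself at most the (ii)-integral for $V$ by monotonicity of the integrand under $V_0\subset V$; the tail $\Gamma\cap\{|\eta|<R\}$ contributes finitely since $f$ is smooth. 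The main obstacle is the geometric lemma: although elementary (solving a quadratic inequality and integrating), it requires careful treatment of the boundary $\lambda=1$ and uniform handling of the sign of $s$ (including $s=0$) in order to obtain clean two-sided bounds.
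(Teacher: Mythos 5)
Your argument is correct and amounts to a tidier, unified version of the change of variables the paper performs twice. The paper's proof of Proposition \ref{iikae_1} goes direction by direction: for (i)$\Rightarrow$(ii) it passes to polar coordinates $\xi=r\sigma$, rescales $\lambda r\mapsto\lambda$, and enlarges the domain to recover $\int_\Gamma\langle\xi\rangle^{2s}|\mathcal{F}[\chi u](\xi)|^2\,d\xi$; for (ii)$\Rightarrow$(i) it splits at $|\xi|=1$ (controlling the low-frequency part by the polynomial growth of $\widehat{\chi u}$), inserts an artificial radial average over a thin shell, and changes variables $\lambda=\lambda'\theta'$ to land back on the $(\lambda,\xi)$-integral of (ii). Your formulation collapses both directions into one Tonelli step plus a two-sided comparison of the weight $\Phi_V(\eta)=\int_{\{\lambda\ge1:\ \eta/\lambda\in V\}}\lambda^{2s-1}\,d\lambda$ with $\langle\eta\rangle^{2s}\mathbf{1}_\Gamma(\eta)$, obtained from the elementary quadratic-inequality computation; note that for $r$ large one has the exact scaling identity $\Phi_V(r\omega)=r^{2s}\int_{\mu_-(\omega)}^{\mu_+(\omega)}\mu^{-2s-1}\,d\mu$, uniformly in $s$ (including $s=0$). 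The two routes encode the same geometry, but yours yields a single reusable two-sided lemma and avoids the paper's ad hoc radial-averaging device, at the cost of tracking the $\lambda\ge1$ truncation and the degeneration of $C(\omega)$ near the boundary of the angular support --- both of which you correctly flag. One minor simplification: in (i)$\Rightarrow$(ii) no separate bounded-$|\eta|$ argument is needed, since $\Phi_V$ vanishes for $|\eta|<|\xi_0|-\delta$, so the upper bound $\Phi_V\le C\langle\eta\rangle^{2s}$ on a cone contained in $\Gamma$ already yields the implication outright.
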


\begin{lemma}
\label{change cut-off}
Let $s\in\re$, $(x_0,\xi_0)\in\r2n$, $u\in \mathcal{S}'(\re^n)$ and
 $\chi\in C^\infty_0(\re^n)$ satisfy $\chi\equiv 1$ near $x_0$.
Suppose that there exists $d>0$ such that 
    \begin{align*}
    \int_1^\infty \lambda^{n-1+2s} \| \mathcal{F}[\chi u](\lambda\xi)
    \|^2_{L^2(B(\xi_0,d))}d\lambda <\infty .
    \end{align*}
Then, there exists a neighborhood $V\subset B(\xi_0,d)$ of $\xi_0$ such
 that for all $\zeta \in \mathcal{S}(\re^n)$ there exists constants
 $C>0$ and $C'>0$
 satisfying 
    \begin{multline}
    \label{2.3_1}
    \int_1^\infty \lambda^{n-1+2s} \| \mathcal{F}[\zeta \chi 
    u](\lambda\xi) \|^2_{L^2(V)}d\lambda\\ \le 
    C \int_1^\infty \lambda^{n-1+2s} \| \mathcal{F}[\chi u](\lambda\xi)
    \|^2_{L^2(B(\xi_0,d))}d\lambda +C'.
    \end{multline}
\end{lemma}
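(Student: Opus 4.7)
The plan is to exploit the convolution formula $\mathcal{F}[\zeta\chi u] = (2\pi)^{-n}\widehat{\zeta}\ast \mathcal{F}[\chi u]$ and split the convolution integral into a near-diagonal piece (which we match with the right-hand side hypothesis by a change of variables and Young's inequality) and a far piece (which we absorb into the constant $C'$ using the Schwartz decay of $\widehat{\zeta}$). Since $\chi u \in \mathcal{E}'(\re^n)$, its Fourier transform is smooth with polynomial growth, say $|\mathcal{F}[\chi u](\eta)| \le C(1+|\eta|)^N$, a fact I will use for the far piece.

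Concretely, I take $V = B(\xi_0, d/4)$ and write
\begin{align*}
(2\pi)^n \mathcal{F}[\zeta\chi u](\lambda\xi)
 = \underbrace{\int_{|\lambda\xi - \eta|<\lambda d/2} \widehat{\zeta}(\lambda\xi - \eta)\mathcal{F}[\chi u](\eta)\,d\eta}_{=:\, I_1(\lambda,\xi)}
 + \underbrace{\int_{|\lambda\xi-\eta|\ge \lambda d/2}\widehat{\zeta}(\lambda\xi-\eta)\mathcal{F}[\chi u](\eta)\,d\eta}_{=:\, I_2(\lambda,\xi)}.
\end{align*}
For $\xi\in V$ and $|\lambda\xi-\eta|<\lambda d/2$, the triangle inequality gives $\eta/\lambda\in B(\xi_0,3d/4)\subset B(\xi_0,d)$. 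After the change of variables $\eta = \lambda\mu$, the piece $I_1$ becomes a truncated convolution in $\mu$ of $\widehat{\zeta}(\lambda\,\cdot)$ against $\mathcal{F}[\chi u](\lambda\mu)\mathbf{1}_{B(\xi_0,d)}(\mu)$. Applying Young's inequality $\|f\ast g\|_{L^2} \le \|f\|_{L^1}\|g\|_{L^2}$ and using $\|\widehat{\zeta}(\lambda\,\cdot)\|_{L^1}=\lambda^{-n}\|\widehat{\zeta}\|_{L^1}$ (finite because $\zeta\in\mathcal{S}$), the Jacobian $\lambda^n$ cancels and I obtain
\begin{align*}
\|I_1(\lambda,\cdot)\|_{L^2(V)} \le \|\widehat{\zeta}\|_{L^1}\,\|\mathcal{F}[\chi u](\lambda\,\cdot)\|_{L^2(B(\xi_0,d))},
\end{align*}
which, after multiplying by $\lambda^{n-1+2s}$ and integrating over $\lambda\ge 1$, yields exactly the first term on the right-hand side of \eqref{2.3_1}.

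For $I_2$, I would estimate $|\widehat{\zeta}(\lambda\xi-\eta)|\le C_M(1+|\lambda\xi-\eta|)^{-M}$ for arbitrary $M$, use the Peetre-type inequality $(1+|\eta|)^N\le (1+|\lambda\xi|)^N(1+|\lambda\xi-\eta|)^N$ to push the growth of $\mathcal{F}[\chi u]$ onto the rapidly decaying factor, and integrate the resulting $(1+|\lambda\xi-\eta|)^{-M+N}$ over the region $|\lambda\xi-\eta|\ge\lambda d/2$. Since $V$ is bounded, $(1+|\lambda\xi|)^N\lesssim\lambda^N$ uniformly in $\xi\in V$, and choosing $M$ large shows $|I_2(\lambda,\xi)|\le C_K\lambda^{-K}$ for arbitrarily large $K$. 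Thus $\int_1^\infty \lambda^{n-1+2s}\|I_2(\lambda,\cdot)\|_{L^2(V)}^2\,d\lambda$ is finite and supplies the constant $C'$.

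There is no genuine obstacle: the argument is essentially bookkeeping. The one point requiring care is choosing the splitting radius $\lambda d/2$ so that it both (a) keeps the near-diagonal region inside $\lambda\cdot B(\xi_0,d)$ for every $\xi\in V$, allowing the right-hand side of \eqref{2.3_1} to appear, and (b) grows linearly in $\lambda$ so that the far-piece tail integral gains arbitrary negative powers of $\lambda$. Attempting to bound $\widehat{\zeta}\ast\mathcal{F}[\chi u]$ directly by Young's inequality on all of $\re^n$ would fail because the hypothesis only controls $\mathcal{F}[\chi u](\lambda\,\cdot)$ on $B(\xi_0,d)$, not globally; the truncation above is precisely what localizes the convolution to the region where the hypothesis applies.
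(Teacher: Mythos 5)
Your argument is correct, and it takes a genuinely different route from the paper. The paper's proof does not estimate the $\lambda$-integral directly: it first passes to the equivalent conic Sobolev norm via Proposition~\ref{iikae_1} (getting $\|\bra{\xi}^s\mathcal{F}[\chi u]\|_{L^2(\Gamma_1)}$ controlled by the hypothesis), then proves the intermediate bound
$\|\bra{\xi}^s\mathcal{F}[\zeta\chi u]\|_{L^2(\Gamma_2)}\le C\|\bra{\xi}^s\mathcal{F}[\chi u]\|_{L^2(\Gamma_1)}+C'$
by writing $\mathcal{F}[\zeta\chi u]=(2\pi)^{-n}\widehat{\zeta}\ast\mathcal{F}[\chi u]$ and splitting the $\eta$-integration into $\Gamma_1$ and $\Gamma_1^c$ (Peetre plus Young on the first, Schwartz decay of $\widehat{\zeta}$ plus polynomial growth of $\mathcal{F}[\chi u]$ on the second), and finally converts back with Proposition~\ref{iikae_1} applied to $\zeta\chi u$. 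Your version eliminates the two-way detour through Proposition~\ref{iikae_1} entirely: by working at the $\lambda$-scale and splitting the convolution at $|\lambda\xi-\eta|=\lambda d/2$, the near-diagonal region is forced into $\lambda\cdot B(\xi_0,d)$ by the triangle inequality, so Young's inequality directly produces the right-hand side of \eqref{2.3_1} without any intermediate norm; your far piece is handled by exactly the same Peetre/Schwartz-decay mechanism the paper uses on $\Gamma_1^c$. The trade-off is one of organization: the paper's route re-uses machinery it has already set up (Proposition~\ref{iikae_1} is needed elsewhere anyway), and the conic-norm statement \eqref{2.3_3} is itself reusable, whereas your proof is more self-contained and avoids the implicit dependence on the additive constant introduced each time Proposition~\ref{iikae_1} is invoked. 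Both are sound; yours is arguably more transparent if one reads this lemma in isolation.
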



\begin{lemma}
\label{structure}
Let $u\in \mathcal{S}'(\re^n)$, $\phi\in\mathcal{S}(\re^n)\setminus\{ 0\}$,
 $(x_0,\xi_0)\in \re^n\times\re^n\setminus\{ 0\}$ and 
 $\chi \in C^\infty_0 (\re^n)$ with $\chi \equiv 1$ near $x_0$.
Suppose that $K$ is a neighborhood of $x_0$ and satisfies 
$\overline{K}\subset \{ x\in\re^n | \chi(x)= 1 \}^\circ$.
Then, there exists $m\in\na$ (which depends on only $u$) such that 
for all $\mu\in\re$ there exists $C_\mu>0$ satisfying
    \begin{align}
     \label{2.3-1}
     \mbox{\boldmath $1$}_K(x) \left| W_{\phi_\lambda}
     [(1-\chi)u] (x,\lambda\xi)\right| \le C_{\mu}\lambda^{-\mu}
     \bra{\xi}^{m}
    \end{align}
 where  $\phi_\lambda (x)=\lambda^{n/4}\phi (\lambda^{1/2}x)$. 
\end{lemma}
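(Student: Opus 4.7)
The plan is to express the wave packet transform as a distributional pairing and then estimate a Schwartz seminorm of the test function in a way that exploits the spatial separation between $K$ and the support of $1-\chi$.

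First I would rewrite
\begin{align*}
W_{\phi_\lambda}[(1-\chi)u](x,\lambda\xi)=\langle u,\psi_{x,\xi,\lambda}\rangle, \qquad
\psi_{x,\xi,\lambda}(y):=(1-\chi)(y)\overline{\phi_\lambda(y-x)}e^{-iy\cdot\lambda\xi}.
\end{align*}
Since $u\in\mathcal S'(\re^n)$ has some finite order $m\in\na$ depending only on $u$, there is $C>0$ such that
$|\langle u,\psi\rangle|\le C\sum_{|\alpha|,|\beta|\le m}\sup_{y}|y^\alpha\partial^\beta\psi(y)|$ for every $\psi\in\mathcal S(\re^n)$. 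The whole task is therefore to show that the relevant seminorm of $\psi_{x,\xi,\lambda}$ is bounded by $C_\mu\lambda^{-\mu}\bra{\xi}^{m}$ uniformly for $x\in K$.

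The key geometric observation is the separation estimate: because $\overline K$ is a compact subset of the open set $U:=\{\chi\equiv 1\}^\circ$, the distance $c:=\mathrm{dist}(\overline K,\re^n\setminus U)>0$. Thus for $x\in K$ and $y\in U$ the function $(1-\chi)(y)$ together with all of its derivatives vanishes (so $\psi_{x,\xi,\lambda}\equiv 0$ there), while for $y\notin U$ we have $|y-x|\ge c$. On this second region I would use the Schwartz decay of $\phi$ in the form
\begin{align*}
|\partial^{\beta_2}\phi_\lambda(y-x)|\le C_{\beta_2,M}\,\lambda^{n/4+|\beta_2|/2}(1+\lambda^{1/2}|y-x|)^{-M}
\le C\,\lambda^{n/4+|\beta_2|/2-M/2}|y-x|^{-M},
\end{align*}
valid for all $\lambda\ge 1$ and arbitrary $M$. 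Together with $|\partial^{\gamma}e^{-iy\cdot\lambda\xi}|\le(\lambda|\xi|)^{|\gamma|}$ and the fact that $\chi\in C_0^\infty$ has derivatives bounded in $y$, Leibniz' rule yields
\begin{align*}
|\partial_y^\beta \psi_{x,\xi,\lambda}(y)|\le C_\beta\,\lambda^{|\beta|+n/4-M/2}\bra{\xi}^{|\beta|}|y-x|^{-M}
\qquad (y\in\re^n\setminus U).
\end{align*}

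Next I would multiply by $|y|^{|\alpha|}$ and notice that for $|\alpha|\le m$ and $x$ varying in the bounded set $K$, the ratio $|y|^{|\alpha|}|y-x|^{-M}$ is uniformly bounded on $\re^n\setminus U$ as soon as $M>|\alpha|$ (the denominator is bounded below by $c$ on a bounded subregion and grows like $|y|^M$ at infinity). Finally I would choose $M$ large enough, say $M\ge 2m+\tfrac n2+2\mu$, so that $\lambda^{m+n/4-M/2}\le \lambda^{-\mu}$. Restricting to $|\alpha|,|\beta|\le m$ and taking the sup in $y$ then gives the seminorm estimate $\|\psi_{x,\xi,\lambda}\|_{(m)}\le C_\mu \lambda^{-\mu}\bra{\xi}^{m}$ for $x\in K$, which combined with the order-$m$ continuity of $u$ produces the claimed bound \eqref{2.3-1}.

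The only nontrivial point is the interplay between the unbounded support of $1-\chi$ and the polynomial weight $|y|^{|\alpha|}$ appearing in the Schwartz seminorms; the remedy is to build in a few extra powers of the decay coming from $\phi$, which is why $M$ must grow both with $\mu$ (to supply the $\lambda^{-\mu}$) and with $m$ (to absorb the weight $|y|^{|\alpha|}$). Everything else is a routine Leibniz expansion.
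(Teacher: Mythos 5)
Your argument is correct and takes a genuinely different route from the paper. The paper invokes the structure theorem for $\mathcal S'$ (writing $u=\bra{y}^l\sum_{|\alpha|\le m}\partial^\alpha f_\alpha$ with $f_\alpha\in L^2$), then passes to the Fourier side: it applies Schwarz's inequality against $\mathcal F[f_\alpha]$, does integration by parts to produce decay in $\bra{\lambda\xi-\eta}$, and finally harvests the crucial $\lambda^{-M/2}$ from the fact that $|y-x|\ge C$ on the relevant support. You instead stay entirely on the physical side: you encode the finite order of $u$ via the seminorm estimate $|\langle u,\psi\rangle|\le C\sum_{|\alpha|,|\beta|\le m}\sup_y|y^\alpha\partial^\beta\psi(y)|$, and then bound the Schwartz seminorm of the test function $\psi_{x,\xi,\lambda}$ directly by Leibniz's rule, exploiting the same geometric separation $\mathrm{dist}(\overline K,\,\mathrm{supp}(1-\chi))\ge c>0$ to convert the Schwartz decay of $\phi$ into an arbitrarily high negative power of $\lambda$. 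The two approaches are equivalent in content (the structure theorem and the order--seminorm characterization of tempered distributions carry the same information), but yours avoids the Fourier-side bookkeeping and $L^2$ duality, making the chain of inequalities shorter and the role of each exponent ($M$ versus $\mu$ and $m$) more transparent. One small point worth stating explicitly if you write this up: for $y\in U:=\{\chi\equiv1\}^\circ$ every derivative of $1-\chi$ vanishes because $U$ is open, so the sup defining the seminorm is genuinely a sup over $y\notin U$, where $|y-x|\ge c$; and since $\overline K\subset U$ with $\chi$ compactly supported, $K$ is bounded, so $|y|^{|\alpha|}|y-x|^{-M}$ is indeed uniformly bounded there once $M>|\alpha|$. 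With those remarks spelled out, the proof is complete.
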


\begin{proof}
Since $u\in \mathcal{S}'(\re^n)$, the structure theorem of
 $\mathcal{S'}$ (see, for example \cite[Theorem 2.14]{Mizohata-1})
 yields that there exist $l,m\in\na$ and functions $f_\alpha \in L^2(\re^n)$
 such that 
    $$u(y)=\bra{y}^l \sum_{|\alpha|\le m}\partial^\alpha f_\alpha (y),$$
 where $\alpha$ denotes multi-indices.
Put $g(x,y)=\overline{\phi_\lambda (y-x)} \{ 1-\chi (y)\}\bra{y}^l$ and 
 take $N\in\na$ satisfying $4N\ge 2m+n+1$.
Since 
 $(1-\Delta_y)^N e^{-iy\cdot (\lambda\xi-\eta)}
 =\bra{\lambda\xi-\eta}^{2N}e^{-iy\cdot (\lambda\xi-\eta)}$ 
 for $N\in\na$, we have, by Schwarz's inequality and integration by parts, 
    \begin{align}
     \label{2.3-3}
     \Big| W_{\phi_\lambda}&[(1-\chi)u] (x,\lambda\xi)\Big| \\
     &\le \sum_{|\alpha|\le m}
     \bigg|  \int_{\re^n} g(x,y)\partial^\alpha f_\alpha(y) 
     e^{-i\lambda y\cdot \xi}dy   \bigg|\notag\\
    &\le \sum_{|\alpha|\le m}\int_{\re^n} \Big|
      \mathcal{F}[g(x, \cdot)](\lambda \xi-\eta)
     \eta^{\alpha} \mathcal{F}[f_\alpha] (\eta)\Big| d\eta\notag\\
    &\le \sum_{|\alpha|\le m}\|\mathcal{F}[f_\alpha]\|_{L^2}
     \left( \int_{\re^n}
     \bigg| \eta^\alpha \int_{\re^n} g(x,y)e^{-iy\cdot(\lambda\xi-\eta)}
     dy\bigg|^2 d\eta\right)^{\frac{1}{2}}\notag\\
    &\le \sum_{|\alpha|\le m}\| f_\alpha \|_{L^2} \left\{ \int_{\re^n}
      |\eta|^{2|\alpha|} \left( \int_{\re^n} 
      \dfrac{|(1-\Delta_y)^N g(x,y)|}{\bra{\lambda\xi-\eta}^{2N}}
      dy\right)^2d\eta\right\}^\frac{1}{2}\notag\\
    &\le\sum_{|\alpha|\le m} \|f_\alpha \|_{L^2}   \bigg(   \int_{\re^n}
     \frac{|\eta|^{2|\alpha|}}{\bra{\lambda\xi-\eta}^{4N}}d\eta
     \bigg)^{\frac{1}{2}}   \int_{\re^n} |(1-\Delta_y)^N g(x,y)|dy.
    \end{align}
Since $|\eta| \le |\eta-\lambda\xi|+|\lambda\xi|$, we have
 $\frac{|\eta|}{2}\le|\eta-\lambda\xi|$ or  
 $\frac{|\eta|}{2}\le |\lambda\xi|$.
So, it follows that
    \begin{align}
      \label{2.3-4}
      \int_{\re^n}\frac{|\eta|^{2|\alpha|}}{\bra{\lambda\xi-\eta}^{4N}}
      d\eta
     &\le \int_{\re^n} \bra{\lambda\xi}^{2m}
      \frac{|\eta|^{2m}}{\bra{\lambda\xi}^{2m}
      \bra{\lambda\xi-\eta}^{4N}}d\eta
    \le C_1\lambda^{2m}\bra{\xi}^{2m}.
    \end{align}
On the other hand, 
    \begin{multline}
    \label{2.3-5}
    \int_{\re^n} |(1-\Delta_y)^N g(x,y)|dy\\
    \le   \sum_{|\beta_1|+|\beta_2|+|\beta_3|\le 2N}
     \int_{\re^n} |\partial^{\beta_1}_y \phi_\lambda (y-x)
    \cdot \partial^{\beta_2}_y\{1-\chi (y)\} \cdot \partial^{\beta_3}_y
    \bra{y}^l |dy 
    \end{multline}
 holds.
If $x\in K$ and $y\in {\rm supp}\{\partial^{\beta_2}_y(1-\chi (y))\}$
 then there exist $C_2,C'_2>0$ such that $|y-x|\ge C_2$ and thus
 $|y-x|\ge C'_2\bra{y-x}$, which shows that 
    \begin{align*}
     |\partial^{\beta_3}_y \bra{y}^l|\le C_3\bra{y}^l
     \le C_3'\bra{y-x}^M\bra{x}^l
     =C_M|y-x|^M \bra{x}^l
    \end{align*}
 for $M \ge l$.
Thus we have
    \begin{align}
    \label{2.3-6}
    & \mbox{\boldmath $1$}_K(x) \int_{\re^n} |\partial^{\beta_1}_y
     \phi_\lambda (y-x)
    \cdot \partial^{\beta_2}_y\{1-\chi (y)\} \cdot \partial^{\beta_3}_y
    \bra{y}^l |dy\notag\\
    &\le C'_M\,\mbox{\boldmath $1$}_K(x) \lambda^{\frac{1}{2}(n+|\beta_1|-M)}
    \int_{\re^n}   |
    (\partial^{\beta_1}_y \phi)(\lambda^{\frac{1}{2}}(y-x))|
    \, (\lambda^{\frac{1}{2}}|y-x|)^M\bra{x}^l dy \notag\\
    &\le C''_M \lambda^{N-\frac{M}{2}}.
\end{align}
From \eqref{2.3-3}, \eqref{2.3-4}, \eqref{2.3-5} and \eqref{2.3-6}
we obtain 
    \begin{align}
    \label{2.3-7}
    \mbox{\boldmath $1$}_K(x) \left| W_{\phi_\lambda}
    [(1-\chi)u] (x,\lambda\xi)\right| \le C'''_M\lambda^{m+N-\frac{M}{2}}
    \bra{\xi}^{m}.
    \end{align}
Thus, 
if we take $M$ sufficiently large, we obtain \eqref{2.3-1}.
\end{proof}

\begin{lemma}
\label{hodai 1}
Let $\phi \in \mathcal{S}(\re^n)$, $\lambda \ge1$ and 
 $\delta, k >0$. 
Set $A=\{ \eta \in\re^n ; |\eta|\ge \delta \lambda^{3/4}\}$.
Then, for all $q>0$ there exists a $C>0$ such that 
    \begin{align}
    \int_A \left| \bra{\lambda\eta}^k  \mathcal{F}[\phi]
    (\lambda^{-\frac{1}{2}}\eta)\right|^2 d\eta \le C
    \lambda^{-q}
\end{align}
for all $\lambda \ge 1$.

\begin{proof}
If $\eta\in A$, then 
 $|\lambda^{-\frac{1}{2}}\eta| \ge \delta\lambda^{\frac{1}{4}}$
 and thus
    \begin{align*}
    |\mathcal{F}[\phi](\lambda^{-1/2}\eta)| 
    \le \dfrac{|\lambda^{-\frac{1}{2}}\eta|^p |\mathcal{F}[\phi]
    (\lambda^{-\frac{1}{2}}\eta)|}{\delta^p \lambda^{\frac{p}{4}}}
   \end{align*}
 for all $p>0$.
Since $\lambda \ge1$, simple calculation yields that 
 $\bra{\lambda\eta}^2\le \lambda^3\bra{\lambda^{-\frac{1}{2}}\eta}^2$.
Thus, we have
    \begin{align*}
    \int_A \left|\bra{\lambda\eta}^k  \mathcal{F}[\phi]
    (\lambda^{-\frac{1}{2}}\eta)\right|^2 d\eta
    &\le\int_A\lambda^{3k}\bra{\lambda^{-\frac{1}{2}}\eta}^{2k}
    \dfrac{|\lambda^{-\frac{1}{2}}\eta|^{2p} |\mathcal{F}[\phi]
    (\lambda^{-\frac{1}{2}}\eta)|^2}{\delta^{2p}\lambda^{\frac{p}{2}}}d\eta\\
    &=\lambda^{3k-\frac{p}{2}+\frac{n}{2}} \delta^{-2p}
     \|\bra{\cdot}^{2k} |\cdot|^{2p}\mathcal{F}[\phi]\|_{L^2(\re^n)}
    \end{align*}
 by change of variables.
Therefore, if we take $p$ sufficiently large then 
 we obtain the desired result.
\end{proof}
\end{lemma}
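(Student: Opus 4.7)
The plan is to exploit the rapid decay of the Schwartz function $\mathcal{F}[\phi]$ evaluated at $\lambda^{-1/2}\eta$, which is forced to be large on $A$. More precisely, for $\eta \in A$ we have $|\lambda^{-1/2}\eta| \geq \delta \lambda^{1/4}$, so for any $p > 0$ we can write
\begin{equation*}
|\mathcal{F}[\phi](\lambda^{-1/2}\eta)| \leq \delta^{-p}\lambda^{-p/4}\,|\lambda^{-1/2}\eta|^{p}\,|\mathcal{F}[\phi](\lambda^{-1/2}\eta)|,
\end{equation*}
which gives a gain of $\lambda^{-p/4}$ at the cost of inserting a polynomial weight. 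Since $p$ is at our disposal, this single factor is where the $\lambda^{-q}$ decay will ultimately come from.

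Next I would deal with the growing weight $\langle \lambda\eta\rangle^{k}$. The key observation is that $\lambda\eta = \lambda^{3/2}\cdot \lambda^{-1/2}\eta$ and $\lambda \geq 1$, so
\begin{equation*}
\langle \lambda\eta\rangle^{2} = 1 + \lambda^{3}|\lambda^{-1/2}\eta|^{2} \leq \lambda^{3}\langle \lambda^{-1/2}\eta\rangle^{2}.
\end{equation*}
This trades the large weight for $\lambda^{3k}$ together with $\langle \lambda^{-1/2}\eta\rangle^{2k}$, which is a fixed Schwartz-type weight in the variable $\zeta = \lambda^{-1/2}\eta$.

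The third step is a change of variables $\zeta = \lambda^{-1/2}\eta$, which produces a Jacobian $\lambda^{n/2}$ and converts the integral into
\begin{equation*}
\lambda^{3k - p/2 + n/2}\delta^{-2p}\int_{\mathbb{R}^n}\langle \zeta\rangle^{2k}|\zeta|^{2p}|\mathcal{F}[\phi](\zeta)|^{2}\,d\zeta.
\end{equation*}
The remaining integral is finite because $\mathcal{F}[\phi] \in \mathcal{S}(\mathbb{R}^n)$. Choosing $p$ so large that $3k - p/2 + n/2 \leq -q$ yields the claimed bound.

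There is no real obstacle: the argument is a bookkeeping exercise and the only thing to check carefully is the exponent count in the final step, together with the elementary inequality $\langle \lambda\eta\rangle \leq \lambda^{3/2}\langle \lambda^{-1/2}\eta\rangle$ that lets us move the weight through the rescaling.
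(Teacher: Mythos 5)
Your proposal is correct and follows essentially the same route as the paper's proof: extract the $\lambda^{-p/4}$ gain from $|\lambda^{-1/2}\eta|\ge\delta\lambda^{1/4}$ on $A$, absorb $\langle\lambda\eta\rangle^{2k}$ via $\langle\lambda\eta\rangle^2\le\lambda^3\langle\lambda^{-1/2}\eta\rangle^2$, change variables $\zeta=\lambda^{-1/2}\eta$, and take $p$ large. (Your final formula is in fact slightly cleaner than the paper's, which has a small notational slip in writing the remaining finite integral as an $L^2$ norm without a square.)
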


\begin{lemma}
\label{remainder}
Let $k \in\na$, $\chi\in C^\infty_0(\re^n)$,
 $\psi\in \mathcal{S}(\re^n)\setminus \{ 0\}$ and 
 $f\in B^\infty (\re^{2n})$.
Put 
    \begin{equation}
    F_{\alpha,\beta}(\eta,\xi)=\iint_{\re^{2n}}\chi (y)\partial^\alpha_x
    f(x,y)\partial^\beta_x (\psi_\lambda (y-x))e^{-ix\cdot\eta-iy\cdot\xi}
    dxdy
    \end{equation}
 for multi-indices $\alpha, \beta$, where 
 $\psi_\lambda (x)=\lambda^{n/4}\psi (\lambda^{1/2}x)$.
Then,
    \begin{equation}
    \| \bra{\xi'}^k F_{\alpha,\beta}(\eta,\lambda\xi-\xi')
    \|^2_{L^2_{\xi'}(\re^n)}\le C\lambda^{\frac{5n}{2}+6k+|\beta| +3}
    \bra{\xi}^{2k +n+1}.
    \end{equation}
\end{lemma}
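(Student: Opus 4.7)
With $\alpha,\beta,\lambda,\eta$ fixed, $F_{\alpha,\beta}(\eta,\cdot)$ is the partial Fourier transform in $y$ of
\[
h_\eta(y):=\chi(y)\int_{\re^n}\partial^\alpha_x f(x,y)\,\partial^\beta_x[\psi_\lambda(y-x)]\,e^{-ix\cdot\eta}\,dx.
\]
The plan is to use Plancherel in $y$ to turn the $L^2_{\xi'}$-norm on the left-hand side into a Sobolev norm of $h_\eta$, and then to estimate that Sobolev norm directly using the $B^\infty$-bounds on $f$, the compact support of $\chi$, and the standard scaling of $\psi_\lambda$.

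First I would substitute $\sigma=\lambda\xi-\xi'$ in the $\xi'$-integral, apply Peetre's inequality $\bra{\lambda\xi-\sigma}^k\le C\bra{\lambda\xi}^k\bra{\sigma}^k$, and use $\bra{\lambda\xi}^{2k}\le\lambda^{2k}\bra{\xi}^{2k}$ (valid since $\lambda\ge 1$) to reduce matters to bounding
\[
C\lambda^{2k}\bra{\xi}^{2k}\int_{\re^n}\bra{\sigma}^{2k}\,|F_{\alpha,\beta}(\eta,\sigma)|^2\,d\sigma.
\]
Since $k\in\na$, expanding $\bra{\sigma}^{2k}$ as a finite positive linear combination of monomials $\sigma^{2\gamma}$ with $|\gamma|\le k$ and then applying Plancherel in $y$ converts this integral into a finite sum $\sum_{|\gamma|\le k}C_\gamma\|\partial^\gamma_y h_\eta\|^2_{L^2_y}$.

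Next I would bound each $\|\partial^\gamma_y h_\eta\|^2_{L^2_y}$ by distributing $\partial^\gamma_y$ via Leibniz. Derivatives of $\chi$ are bounded and supported in $\mathrm{supp}\,\chi$, and $|\partial^{\gamma_2}_y\partial^\alpha_x f|$ is uniformly bounded by the $B^\infty$-hypothesis. Each $y$-derivative falling on $\psi_\lambda(y-x)$ produces a factor $\lambda^{1/2}$, on top of the $\lambda^{n/4+|\beta|/2}$ already present in $\partial^\beta_x\psi_\lambda$. The remaining $x$-integral $\int|(\partial^{\beta+\gamma_3}\psi)(\lambda^{1/2}(y-x))|\,dx$ contributes $\lambda^{-n/2}$ after the substitution $u=\lambda^{1/2}(y-x)$. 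Together these yield
\[
|\partial^\gamma_y h_\eta(y)|\le C\,\mbox{\boldmath $1$}_{\mathrm{supp}\,\chi}(y)\,\lambda^{-n/4+(|\beta|+|\gamma|)/2},
\]
so squaring, integrating in $y$ (bounded by $|\mathrm{supp}\,\chi|$), and summing over $|\gamma|\le k$ give $\sum_{|\gamma|\le k}\|\partial^\gamma_y h_\eta\|^2_{L^2_y}\le C\lambda^{-n/2+|\beta|+k}$. Combined with the $\lambda^{2k}\bra{\xi}^{2k}$ prefactor, this produces a bound of order $C\lambda^{3k+|\beta|-n/2}\bra{\xi}^{2k}$, which is comfortably dominated by the stated $C\lambda^{5n/2+6k+|\beta|+3}\bra{\xi}^{2k+n+1}$.

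The main obstacle is the routine but fussy bookkeeping of $\lambda$-powers through Peetre's inequality, Plancherel, and the $x$-substitution; the very generous exponents on the right-hand side, however, leave so much slack that no delicate oscillatory-integral argument is required, and the $\eta$-variable plays no role in the estimate.
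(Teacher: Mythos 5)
Your proof is correct, but it takes a genuinely different route from the paper's. The paper works pointwise: it applies $(1-\Delta_y)^N$ under the integral with $N=2k+n+1$ to obtain the decay $|F_{\alpha,\beta}(\eta,\lambda\xi-\xi')|\le C\bra{\lambda\xi-\xi'}^{-2N}\lambda^{-n/4+N+|\beta|/2}$, then multiplies and divides by $\bra{\lambda\xi}^N$, dominates the resulting quotient by $\bra{\xi'}^{-(n+1)}$ via a Peetre-type inequality, and integrates in $\xi'$. You instead observe that $F_{\alpha,\beta}(\eta,\cdot)=\widehat{h_\eta}$, so after a shift and Peetre's inequality the $L^2_{\xi'}$-norm with weight $\bra{\xi'}^k$ reduces to a sum of $\|\partial^\gamma_y h_\eta\|^2_{L^2}$ over $|\gamma|\le k$ by Plancherel, which you estimate directly with Leibniz, the $B^\infty$-bounds on $f$, the compact support of $\chi$, and the $\lambda$-scaling of $\psi_\lambda$. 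The Plancherel route is cleaner: it avoids the two lossy pointwise dominations of the convolution-type kernel and consequently gives the strictly sharper bound $C\lambda^{3k+|\beta|-n/2}\bra{\xi}^{2k}$, versus the paper's $C\lambda^{5n/2+6k+|\beta|+3}\bra{\xi}^{2k+n+1}$; since $\lambda\ge1$ in all applications and the final use of the lemma only requires ``$\lambda^{-N}$ for $N$ as large as you like after choosing $L$ large,'' this improvement is harmless slack, but it is genuine. The only point worth tightening in a write-up is to note explicitly that differentiation under the $x$-integral is justified because $\psi$ is Schwartz and $f\in B^\infty$ has bounded mixed derivatives, and that $\bra{\sigma}^{2k}$ is a nonnegative combination of $\sigma^{2\gamma}$ with $|\gamma|\le k$ so that Plancherel applies termwise.
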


\begin{proof}
Let $N=2k+n+1$.
Since $(1-\Delta_y)^N e^{-iy\cdot (\lambda \xi -\xi')}=
 \bra{\lambda\xi-\xi'}^{2N} e^{-iy\cdot (\lambda \xi -\xi')}$,
we have, by integration by parts,
    \begin{align*}
    &\quad  |F_{\alpha,\beta}(\eta,\lambda\xi-\xi')|\\
    &\le \dfrac{1}{\bra{\lambda\xi-\xi'}^{2N}}
     \iint_{\re^{2n}}\Big| (1-\Delta_y)^{N}\{\chi (y)\partial^\alpha_x
     f(x,y)\partial^\beta_x (\psi_\lambda (y-x))\} \Big|dxdy\\
    &\le \dfrac{C}{\bra{\lambda\xi-\xi'}^{2N}}
     \sum_{|\gamma_1|+|\gamma_2|+|\gamma_3|\le 2N}
     \iint_{\re^{2n}}\Big| \partial^{\gamma_1}_y\chi (y)\cdot
     \partial^{\gamma_2}_y\partial^\alpha_x f(x,y) \cdot
     \partial^{\gamma_3}_y\partial^\beta_x (\psi_\lambda (y-x))
     \Big|dxdy\\
    &\le \dfrac{C'}{\bra{\lambda\xi-\xi'}^{2N}} 
     \sum_{|\gamma_1|+|\gamma_3|\le 2N}
     \lambda^{\frac{n}{4}+\frac{|\gamma_3|}{2}+\frac{|\beta|}{2}}
     \iint_{\re^{2n}} | \partial^{\gamma_1}_y\chi (y) \cdot 
     (\partial^{\gamma_3+\beta}\psi) (\lambda^{\frac{1}{2}}(y-x))|dxdy\\
    &= \dfrac{C'}{\bra{\lambda\xi-\xi'}^{2N}} 
     \sum_{|\gamma_1|+|\gamma_3|\le 2N}
     \lambda^{-\frac{n}{4}+\frac{|\gamma_3|}{2}+\frac{|\beta|}{2}}
     \int_{\re^{n}} | \partial^{\gamma_1}_y\chi (y)| dy \int_{\re^n}
     |\partial^{\gamma_3+\beta}\psi (x)|dx\\
    &\le \dfrac{C''}{\bra{\lambda\xi-\xi'}^{2N}} 
     \lambda^{-\frac{n}{4}+N+\frac{|\beta|}{2}}.
    \end{align*}
Thus, it follows that 
    \begin{align*}
    \| \bra{\xi'}^k F_{\alpha,\beta}(\eta,\lambda\xi-\xi')
    \|^2_{L^2_{\xi'}(\re^n)}
    &=\int_{\re^n}\bra{\xi'}^{2k} |F_{\alpha,\beta}
    (\eta,\lambda\xi-\xi')|^2 d\xi'\\
    &\le C \lambda^{-\frac{n}{2}+2N+|\beta|}
     \bra{\lambda\xi}^{N}\int_{\re^n} 
    \dfrac{\bra{\xi'}^{2k}}{\bra{\lambda\xi-\xi'}^{4N}
    \bra{\lambda\xi}^{N}} d\xi'.
    \end{align*}
Since 
$$
\dfrac{\bra{\xi'}^{2k}}{\bra{\lambda\xi-\xi'}^{4N}\bra{\lambda\xi}^{N}}
\le \dfrac{C}{\bra{\xi'}^{n+1}}\quad 
\text{and}\quad \bra{\lambda\xi}^N \le \lambda^N \bra{\xi}^N
\quad \text{for} \quad\lambda \ge 1
$$
 we obtain desired result.
\end{proof}

\section{Proof of Theorem \ref{characterization}}

Trivially, (iii) yields (ii). So, in order to prove Theorem 
 \ref{characterization}, we show that (i) implies (iii) and (ii) implies
 (i). 

\begin{proposition}
\label{1-3} 
Under the same assumption as in  Theorem \ref{characterization},
(i) implies (iii).
\end{proposition}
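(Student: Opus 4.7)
My plan is to reduce to the Fourier-side reformulation of the $H^s$ wave front set (Proposition \ref{iikae_1}), split $u = \chi u + (1-\chi)u$, and treat the main piece by Plancherel in the $x$-variable. Definition \ref{Sobolev def} gives $\chi \in C_0^\infty(\re^n)$ with $\chi \equiv 1$ near $x_0$ and a conic neighborhood $\Gamma$ of $\xi_0$ such that $\|\bra{\xi}^s\widehat{\chi u}\|_{L^2(\Gamma)} < \infty$, whence Proposition \ref{iikae_1} supplies a neighborhood $V_0$ of $\xi_0$ with $\int_1^\infty \lambda^{n-1+2s}\|\mathcal{F}[\chi u](\lambda\cdot)\|_{L^2(V_0)}^2\,d\lambda < \infty$. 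I would fix nested open neighborhoods $V \subset \overline{V} \subset V' \subset V_0$ of $\xi_0$ with $\delta := \mathrm{dist}(V, \re^n \setminus V') > 0$ and a neighborhood $K$ of $x_0$ with $\overline{K} \subset \{\chi \equiv 1\}^\circ$. For an arbitrary $\phi \in \mathcal{S}(\re^n) \setminus \{0\}$, the contribution from $(1-\chi)u$ is controlled immediately by Lemma \ref{structure}: the pointwise bound $|W_{\phi_\lambda}[(1-\chi)u](x,\lambda\xi)| \leq C_\mu\lambda^{-\mu}\bra{\xi}^m$ on $K$, with $\mu$ chosen sufficiently large and $V$ bounded, makes its contribution to the required integral finite.

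For the principal piece $W_{\phi_\lambda}[\chi u]$, since $\chi u \in \mathcal{E}'(\re^n)$ its Fourier transform is smooth of polynomial growth, say $|\mathcal{F}[\chi u](\eta)| \leq C\bra{\eta}^{M_0}$. Viewing $W_{\phi_\lambda}[\chi u](\cdot,\lambda\xi)$ as (up to a modulation) a convolution in $x$ and applying Plancherel gives
\begin{equation*}
\int_{\re^n}|W_{\phi_\lambda}[\chi u](x,\lambda\xi)|^2\,dx = C_n \int_{\re^n}|\mathcal{F}[\phi_\lambda](\eta - \lambda\xi)|^2|\mathcal{F}[\chi u](\eta)|^2\,d\eta.
\end{equation*}
Restricting the $x$-integration to $K$ can only shrink the left side; I would then integrate in $\xi \in V$, apply Fubini, and split the $\eta$-integral into the main region $\{\eta/\lambda \in V'\}$ and its complement. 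In the main region, the trivial bound $\int_V |\mathcal{F}[\phi_\lambda](\eta-\lambda\xi)|^2\,d\xi \leq C\lambda^{-n}$ (from translation and the $L^2$-identity $\|\mathcal{F}[\phi_\lambda]\|_{L^2}=\|\mathcal{F}[\phi]\|_{L^2}$) combined with the substitution $\eta = \lambda\zeta$ reduces matters to $C\|\mathcal{F}[\chi u](\lambda\cdot)\|_{L^2(V')}^2$, whose $\lambda^{n-1+2s}$-weighted integral is finite by the choice of $V'$.

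The hard part is the complementary region $\{\eta/\lambda \notin V'\}$: $\mathcal{F}[\chi u]$ may grow polynomially there, so I must extract from $|\mathcal{F}[\phi_\lambda](\eta-\lambda\xi)|^2 = \lambda^{-n/2}|\mathcal{F}[\phi](\lambda^{-1/2}(\eta-\lambda\xi))|^2$ both rapid $\lambda$-decay (to beat the weight $\lambda^{n-1+2s}$) and sufficient $\eta$-decay (to absorb $\bra{\eta}^{M_0}$). For $\xi \in V$ and $\eta/\lambda \notin V'$ one has $|\eta-\lambda\xi| \geq \lambda\delta$, hence $|\lambda^{-1/2}(\eta-\lambda\xi)| \geq \lambda^{1/2}\delta$; combining this with the observation that $|\eta-\lambda\xi| \geq |\eta|/2$ once $|\eta| \geq 2\lambda\sup_{\xi\in V}|\xi|$, and with the elementary inequality $\bra{\lambda^{-1/2}a}^{-2} \leq \lambda\bra{a}^{-2}$ ($\lambda\geq 1$), I would split the exponent $\bra{\lambda^{-1/2}(\eta-\lambda\xi)}^{-2(N_1+N_2)}$ into a factor $\lambda^{-N_1}$ and a factor $\bra{\eta}^{-2N_2}$. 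Taking $N_2 > M_0 + n/2$ and $N_1$ arbitrarily large yields rapid decay in $\lambda$ of the remainder contribution. The careful distribution of the Schwartz decay of $\mathcal{F}[\phi]$ between the $\lambda$- and $\eta$-variables is the main technical obstacle.
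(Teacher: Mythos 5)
Your proof is correct, and it takes a genuinely different and simpler route than the paper.

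Both proofs start by reducing to the Fourier-side criterion via Proposition \ref{iikae_1} and kill the $(1-\chi)u$ contribution using Lemma \ref{structure}. The point of departure is what happens with the $\chi u$ piece. The paper inserts a second cut-off $\widetilde\chi$ in the $x$-variable, Taylor-expands $\widetilde\chi(x)$ around $y$, and is thereby forced to split $\mathcal F_{x\to\eta}\bigl[\widetilde\chi(x)W_{\phi_\lambda}[\chi^2 u]\bigr]$ into three pieces $I_1,I_2,I_3$, with the Taylor remainder $I_3$ handled by the separate Lemma \ref{remainder} and the relation to $\|\mathcal F[\chi u](\lambda\cdot)\|_{L^2}$ handled by Lemma \ref{change cut-off}. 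You bypass all of this by observing that $\int_K\le\int_{\re^n}$ directly: once you drop the cut-off $\widetilde\chi$, $W_{\phi_\lambda}[\chi u](\cdot,\lambda\xi)$ is a genuine convolution, and Plancherel in $x$ gives the clean product $|\mathcal F[\phi_\lambda](\eta-\lambda\xi)|^2|\mathcal F[\chi u](\eta)|^2$ with no error terms. This renders Lemma \ref{change cut-off}, the Taylor expansion, and Lemma \ref{remainder} unnecessary for this direction. The paper's machinery is really needed for the \emph{converse} direction (Proposition \ref{2-1}), where one needs a lower bound and hence cannot enlarge $\int_K$ to $\int_{\re^n}$; the paper reuses the same decomposition in both directions for uniformity, but your streamlined argument is a real simplification of the present implication. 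Your far-field estimate (splitting $\eta/\lambda\in V'$ vs.\ $\eta/\lambda\notin V'$ with direct Schwartz decay) also replaces the paper's use of the ball $B(0,d\lambda^{3/4})$ and Lemma \ref{hodai 1}, and is of comparable simplicity. One small point worth making explicit when you write this up: in the complementary region, the bound $\bra{\eta-\lambda\xi}^{-2N_2}\le C\bra{\eta}^{-2N_2}$ is immediate when $|\eta|\ge 2\lambda\sup_V|\xi|$, and when $|\eta|<2\lambda\sup_V|\xi|$ it follows since both $\bra{\eta-\lambda\xi}$ and $\bra{\eta}$ are then comparable to $\lambda$ (using $|\eta-\lambda\xi|\ge\lambda\delta$); stating both cases closes the only gap in the sketch.
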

\begin{proof}
By (i) and Proposition \ref{iikae_1}, there exist a neighborhood $V_1$
 of $\xi_0$ and $\chi\in C^\infty_0(\re^n)$ with $\chi\equiv 1$ near 
 $x_0$ such that 
    \begin{align*}
     \int_1^\infty  \lambda^{n-1+2s}
     \| \mathcal{F}[\chi u] (\lambda \xi)\|^2_{L^2(V_1)}d\lambda 
     <\infty .
    \end{align*}
By Lemma \ref{change cut-off}, there exists $d>0$ such that
 $B(\xi_0,2d)\subset V_1$ and 
    \begin{multline}
    \label{1.2-1}
     \int_1^\infty  \lambda^{n-1+2s}   
     \| \mathcal{F}[\zeta \chi u](\lambda\xi) \|^2_{L^2(B(\xi_0,2d))}d\lambda \\ 
     \le  C \int_1^\infty  \lambda^{n-1+2s} \| \mathcal{F}[\chi u](\lambda\xi)
     \|^2_{L^2(V_1)}d\lambda +C'
    \end{multline}
 for all $\zeta \in \mathcal{S}(\re^n)$.
Let $V=B(\xi_0,d)$ and $K$ be a neighborhood of $x_0$ satisfying 
 $\overline{K}\subset \{x\in\re^n; \chi (x)=1\}^\circ$.
It is enough to show that for all $N\in\na$ there exists $C_N>0$ such that
    \begin{align}
     \label{1.2-2}
      \int_{V} \int_{K} |W_{\phi_{\lambda}} u(x,\lambda \xi)|^{2} dx d\xi
      \le C_N\left(\|\mathcal{F}[\chi u](\lambda \xi)
      \|^2_{L^2(V_1)} +\lambda^{-N}\right).
    \end{align}
In fact, if we take $N=n+[2s]+1$ then 
    \begin{multline*}
    \quad  \int_1^\infty \lambda^{n-1+2s}
      \int_{V} \int_{K} |W_{\phi_{\lambda}} u(x,\lambda \xi)|^{2} dx
     d\xi d\lambda\\
      \le C  \int_1^\infty \lambda^{n-1+2s} \Big(\|\mathcal{F}[\chi u](\lambda \xi)
      \|^2_{L^2(V_1)} +\lambda^{-n-[2s]-1}\Big) d\lambda <\infty .
\end{multline*}
In order to prove \eqref{1.2-2}, we divide 
 $W_{\phi_\lambda}u(x,\lambda\xi)$ into two parts:
    \begin{align*}
     W_{\phi_\lambda}u(x,\lambda\xi) =W_{\phi_\lambda}[\chi^2 u](x,\lambda\xi)
     +W_{\phi_\lambda}[(1-\chi^2)u](x,\lambda\xi).
    \end{align*}
Since $\overline{K}\subset \{ x\in\re^n\,|\, \chi(x)=1\}^\circ$, 
 it follows that for all $N\in\na$ there exists $C_N>0$ such that
    \begin{align}
    \label{1.2-4}
     \int_V \int_K |W_{\phi_\lambda}[(1-\chi^2)u]
     (x,\lambda\xi)|^2dxd\xi \le C_N \lambda^{-N}
    \end{align}
by Lemma \ref{structure}.

Let $\widetilde{\chi}\in C^\infty_0(\re^n)$ satisfy
 $\widetilde{\chi}\equiv 1$ on $K$ and 
 $\overline{{\rm supp}\,\widetilde{\chi}} \subset \{x\in\re^n; \chi (x)=1\}^\circ$.
Applying Taylor's theorem to $\widetilde{\chi} (x)$, we have
     \begin{align}
     \label{1.2-5}
      \widetilde{\chi} (x)
      =\widetilde{\chi}(y)+\sum_{1\le|\alpha|\le L}
      \dfrac{\partial^\alpha_x\widetilde{\chi}(y)}{\alpha !}(x-y)^\alpha
      +\sum_{|\alpha|= L+1}(x-y)^{\alpha}R_\alpha (x,y),
     \end{align}
 where 
    $$R_\alpha (x,y)=\dfrac{L+1}{\alpha !}
    \int_0^1 \partial^\alpha_x\widetilde{\chi}(y+\theta
    (x-y))(1-\theta)^L d\theta.$$
By Plancherel's theorem and \eqref{1.2-5}, we have
    \begin{align*}
     \int_V\int_K |W_{\phi_\lambda}[\chi^2 u](x,\lambda\xi)|^2 dxd\xi
    &\le \int_V\int_{\re^n}  |\widetilde{\chi} (x) 
     W_{\phi_\lambda}[\chi^2 u](x,\lambda\xi)|^2 dxd\xi\notag\\
    &=\int_V  \int_{\re^n} \Big|\mathcal{F}_{x\to\eta}\Big[ 
     \widetilde{\chi} (x) W_{\phi_\lambda}[\chi^2 u](x,\lambda\xi)\Big] 
     (\eta)\Big|^2 d\eta d\xi\notag\\
    &\le C \sum_{j=1}^3 \int_V\int_{\re^n}|I_j|^2 d\eta d\xi,
    \end{align*}
 where
    \begin{align}
    &\label{1.2-7}I_1= \iint_{\re^{2n}} \widetilde{\chi}(y) 
     \overline{\phi_\lambda (y-x)} \{\chi (y)\}^2 u(y) 
     e^{-i\lambda y\cdot\xi-ix\cdot\eta}dydx,\\
    &\label{1.2-8}I_2=\sum_{1\le |\alpha|\le L} \iint_{\re^{2n}} 
     \dfrac{\partial^\alpha_x\widetilde{\chi}(y)}{\alpha !}(x-y)^\alpha
     \overline{\phi_{\lambda}(y-x)}\{\chi (y)\}^2 u(y)
     e^{-i\lambda y\cdot\xi-ix\cdot\eta}dydx
    \end{align}
 and 
    \begin{align}
     \label{1.2-9}
    I_3=\sum_{|\alpha| = L+1}\iint_{\re^{2n}} R_\alpha (x,y)(x-y)^\alpha
     \overline{\phi_\lambda (y-x)}\{\chi (y)\}^2 u(y)
     e^{-i\lambda y\cdot\xi-ix\cdot\eta}dydx.
    \end{align}

First, we consider $I_1$. 
By Fubini's theorem and change of variables, we have
    \begin{align*}
     I_1
     =\lambda^{-\frac{n}{4}} \overline{\mathcal{F}[\phi]
     (\lambda^{-\frac{1}{2}}\eta)}
     \mathcal{F} [\widetilde{\chi}\chi^2 u](\eta+\lambda\xi).
    \end{align*}
Let $B=B(0,d\lambda^{3/4})$.
If $\eta\in B$, $\xi\in V$ and $\eta +\lambda\xi=\lambda\xi'$
 then $\xi'\in B(\xi_0,2d)$. 
So, we have, by \eqref{1.2-1},
    \begin{align}
    \label{1.2-11}
    \int_V\int_{B} |I_1|^2d\eta d\xi
    &=\int_{B} \int_V |\lambda^{-\frac{n}{4}}\mathcal{F}[\phi]
    (\lambda^{-\frac{1}{2}}\eta)|^2 |\mathcal{F}
    [\widetilde{\chi}\chi^2 u](\eta+\lambda\xi)|^2 d\xi d\eta \notag \\
    &\le \int_{B}  |\lambda^{-\frac{n}{4}}\mathcal{F}[\phi]
    (\lambda^{-\frac{1}{2}}\eta)|^2 d\eta 
    \int_{B(\xi_0,2d)}|\mathcal{F}
    [\widetilde{\chi}\chi^2 u](\lambda\xi)|^2 d\xi \notag \\
     &\le \|\phi\|^2_{L^2(\re^n)}\| \mathcal{F}[\chi u](\lambda\xi)
    \|^2_{L^2(V_1)}.
    \end{align}
If $\eta \in B^c$ and $\xi\in V$ then 
 $\bra{\eta +\lambda\xi}\le C\bra{\lambda\eta}$ holds.
Since $u\in \mathcal{S}'(\re^n)$, there exists 
 $k \in\na$ such that $\bra{\xi}^{-k} 
 \mathcal{F}[\psi u](\xi)\in L^2(\re^n)$ for all 
 $\psi\in C^\infty_0(\re^n)$.
Thus, change of variables and Lemma \ref{hodai 1} yeild that
 for all $N\in\na$ there exists $C_N>0$ such that
    \begin{align}
    \label{1.2-10}
    \int_V\int_{B^c} |I_1|^2d\eta d\xi
    &\le C \int_{B^c} \int_V 
    \frac{\bra{\lambda\eta}^{2k}}{\bra{\eta +\lambda\xi}^{2k}}
    \Big|\lambda^{-\frac{n}{4}}\mathcal{F}[\phi]
    (\lambda^{-\frac{1}{2}}\eta) \mathcal{F}
    [\widetilde{\chi}\chi^2 u](\eta+\lambda\xi)\Big|^2 d\xi d\eta \notag \\ 
    & \le C \lambda^{-\frac{3n}{2}}\int_{B^c}
    |\bra{\lambda\eta}^{2k} \mathcal{F}[\phi]
    (\lambda^{-\frac{1}{2}}\eta)|^2 d\eta
     \int_{\re^n}\left|\dfrac{\mathcal{F}
    [\widetilde{\chi}\chi^2 u](\xi)}{\bra{\xi}^k}\right|^2 d\xi\notag\\
    &\le C_N \lambda^{-N}.
    \end{align}

Next, we consider $I_2$. By Fubini's theorem and change of variables, we have
    \begin{align*}
     I_2
     &=\sum_{1\le|\alpha|\le L}
     \dfrac{\lambda^{-\frac{|\alpha|}{2}-\frac{n}{4}}}{\alpha !}
     \mathcal{F}[(-y)^\alpha \phi (y)](\lambda^{-\frac{1}{2}}\eta)
     \mathcal{F}[(\partial^\alpha \widetilde{\chi})\chi^2 u] (\eta +\lambda\xi).
    \end{align*}
Thus, in the similar calculation as above, 
for all $N\in\na$ there exists $C_N,C'_N>0$ such that
    \begin{align}
     \int_V\int_{\re^n}|I_2|^2d\eta d\xi 
     &\label{1.2-16}\le C_N \sum_{1\le |\alpha|\le L}
     \dfrac{\lambda^{-\frac{|\alpha|}{2}}}{\alpha !}
     \left\{ \| \mathcal{F}
     [(\partial^\alpha\widetilde{\chi})\chi^2 u](\lambda\xi) 
     \|^2_{L^2(B(\xi_0,2d))}
     +\lambda^{-N}\right\}\\
     &\label{1.2-12}
     \le C'_N (\| \mathcal{F}[\chi u]
     (\lambda\xi)\|^2_{L^2(V_1)}+\lambda^{-N}).
    \end{align}

Finally, we consider $I_3$. 
Let $M\in\na$ and $4M>n+1$.
Since $(1-\Delta_x)^{M} e^{-ix\cdot \eta}=\bra{\eta}^{2M}e^{-ix\cdot\eta}$,
 we have, by integration by parts and Schwarz's inequality, 
    \begin{align}
    \label{1.2-13}
    |I_3| & \le  \sum_{|\alpha|=L+1}\bigg| \iint_{\re^{2n}}
     \dfrac{(1-\Delta_x)^{M} \{R_\alpha
     (x,y)(x-y)^\alpha \overline{\phi_\lambda (y-x)}\}}{\bra{\eta}^{2M}}
     \{\chi(y)\}^2u(y)e^{-ix\cdot\eta -i\lambda y\cdot \xi}dydx\bigg|\notag\\
    &\le \dfrac{1}{\bra{\eta}^{2M}}\sum_{|\alpha|=L+1}\int_{\re^n}
     |G_\alpha(\eta, \lambda\xi-\xi')\mathcal{F}[\chi u](\xi')
     |d\xi'\notag\\
    &\le \dfrac{1}{\bra{\eta}^{2M}} \left\| 
     \dfrac{\mathcal{F}[\chi u]}{\bra{\cdot}^{k}}
     \right\|_{L^2(\re^n)} \sum_{|\alpha|=L+1}
    \|\bra{\xi'}^{k} G_\alpha(\eta,\lambda\xi-\xi')\|_{L^2_{\xi'}(\re^n)},
    \end{align}
 where
    \begin{equation*}
    G_\alpha (\eta ,\xi)=\iint_{\re^{2n}} \chi (y)
    (1-\Delta_x)^{M}\{R_\alpha
    (x,y)(x-y)^\alpha \overline{\phi_\lambda (y-x)}\}
    e^{-ix\cdot\eta -iy\cdot\xi}dxdy.
    \end{equation*}
Put $g (x)=(-x)^\alpha \overline{\phi (x)}$ and 
 $g_\lambda (x)=\lambda^{\frac{n}{4}}g (\lambda^{\frac{1}{2}}x)$.
Since
    \begin{align*}
    |G_\alpha (\eta,\xi)|
    &=\lambda^{-\frac{|\alpha|}{2}}\bigg| \iint_{\re^{2n}} 
    \chi (y) (1-\Delta_x)^{M}\{R_\alpha
    (x,y)g_\lambda (y-x)\}e^{-ix\cdot\eta -iy\cdot\xi}dxdy\bigg| \\
    &\le  \lambda^{-\frac{L+1}{2}}\sum_{|\beta|+|\gamma|\le 2M}
    \bigg| \iint_{\re^{2n}} \chi(y) \partial^{\beta}_x
    R_\alpha (x,y)\partial^\gamma_x (g_\lambda (y-x))
    e^{-ix\cdot \eta -iy\cdot \xi} dxdy\bigg| ,
    \end{align*}
 we have, by Lemma \ref{remainder},
    \begin{align}
    \label{1.2-14}
    \|\bra{\xi'}^k G_\alpha (\eta ,\lambda\xi-\xi')\|^2_{L^2_{\xi'}(\re^n)}
    \le C\bra{\xi}^{2k+n+1}
    \lambda^{-(L+1)+\frac{5n}{2}+6k+2M+3}.
    \end{align}
Since we can take $L$ arbitrarily, 
\eqref{1.2-13} and \eqref{1.2-14} yeild that
 for all $N\in\na$ there exists $C_N>0$ such that
    \begin{align}
    \label{1.2-15}
    &\int_V\int_{\re^n} |I_3|^2d\eta d\xi\le C_N\lambda^{-N}
    \end{align}
Combining \eqref{1.2-4}, \eqref{1.2-11},  \eqref{1.2-10}, 
 \eqref{1.2-12} and \eqref{1.2-15}, we obtain 
 \eqref{1.2-2}.
\end{proof}

\medskip 
\begin{proposition}
\label{2-1} 
Under the same assumption as in  Theorem \ref{characterization},
(ii) implies (i).
\end{proposition}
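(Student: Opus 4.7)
The plan is to reduce, via Proposition~\ref{iikae_1}, to producing a cutoff $\chi\in C^\infty_0(\re^n)$ with $\chi\equiv 1$ near $x_0$ and a neighborhood $V'$ of $\xi_0$ such that
\[
\int_1^\infty \lambda^{n-1+2s}\|\mathcal{F}[\chi u](\lambda\xi)\|^2_{L^2(V')}\,d\lambda<\infty.
\]
I would fix $\chi$ supported in a small compact subset of $K^\circ$ with $\chi\equiv 1$ on a smaller neighborhood of $x_0$, and $V'\subset V^\circ$ an open ball around $\xi_0$ whose closure lies in $V$.

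The main vehicle is the wave packet reconstruction formula, which holds at every fixed scale $\lambda\ge 1$ because $\|\phi_\lambda\|_{L^2}=\|\phi\|_{L^2}$:
\[
u(y)=\frac{1}{(2\pi)^n\|\phi\|^2_{L^2}}\iint_{\re^{2n}} W_{\phi_\lambda}u(x,\eta)\,\phi_\lambda(y-x)\,e^{iy\cdot\eta}\,dx\,d\eta.
\]
Multiplying by $\chi(y)e^{-iy\cdot\lambda\xi}$, integrating in $y$, and substituting $\eta=\lambda\eta'$ yields
\[
\mathcal{F}[\chi u](\lambda\xi)=\frac{\lambda^n}{(2\pi)^n\|\phi\|^2_{L^2}}\iint_{\re^{2n}} W_{\phi_\lambda}u(x,\lambda\eta')\,G(x,\lambda(\xi-\eta'))\,dx\,d\eta',
\]
where $G(x,\zeta)=\int\chi(y)\phi_\lambda(y-x)e^{-iy\cdot\zeta}\,dy$. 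The rescaling $y=x+\lambda^{-1/2}y'$ inside $G$ puts it in a form from which both its pointwise size and its rapid Fourier decay in $\zeta$ can be read off, exactly as in Lemma~\ref{hodai 1}.

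I would then split the $(x,\eta')$-integral into the four regions $K\times V$, $K\times V^c$, $K^c\times V$, and $K^c\times V^c$. For the main region $K\times V$, a further split at scale $|\xi-\eta'|\sim\lambda^{-1/4}$ (in the spirit of the $B=B(0,d\lambda^{3/4})$ split used for $I_1$ in the proof of Proposition~\ref{1-3}) isolates a low-frequency convolution-in-$\eta'$ term whose $L^2(V')$-norm in $\xi$ can be controlled by a Young/Cauchy--Schwarz estimate, producing a bound of the form
\[
\|\mathcal{F}[\chi u](\lambda\cdot)\|^2_{L^2(V')}\le C\int_V\int_K |W_{\phi_\lambda}u(x,\lambda\eta')|^2\,dx\,d\eta' + (\text{rapidly decaying error}),
\]
which matches the hypothesis integrand. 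The complementary high-frequency part is handled by Lemma~\ref{hodai 1}, whose rapid decay in $\lambda$ absorbs the polynomial-in-$\lambda$ growth of $W_{\phi_\lambda}u$ guaranteed by $u\in\mathcal{S}'$.

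For the three off-diagonal regions, rapid decay in $\lambda$ is expected directly. When $x\notin K$ but $y\in\mathrm{supp}\,\chi$, the separation $|y-x|$ is bounded below, so $\phi_\lambda(y-x)$ and its derivatives are $O(\lambda^{-N})$ for every $N$ by Schwartz decay of $\phi$ --- this is the mechanism of Lemma~\ref{structure}. When $\eta'\notin V$ and $\xi\in V'$, the separation $|\xi-\eta'|$ is bounded below, so repeated integration by parts in $y$ inside $G$ yields $O(\lambda^{-N})$ decay. Combined with a tempered polynomial bound on $W_{\phi_\lambda}u(x,\lambda\eta')$, these contributions are integrable against $\lambda^{n-1+2s}\,d\lambda$. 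The principal obstacle is the main region $K\times V$: obtaining a convolution-in-$\eta'$ estimate for the $\lambda$-dependent kernel $G$ whose norms track just right so that the $\lambda^n$ prefactor in the reconstruction identity cancels and the final bound matches the hypothesis integrand without a surplus power of $\lambda$. This delicate balancing, parallel to the role played by Lemmas~\ref{hodai 1} and~\ref{remainder} in the proof of Proposition~\ref{1-3}, will be the heart of the argument.
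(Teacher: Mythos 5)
Your route through the wave packet reconstruction formula is genuinely different from the paper's (which instead multiplies $W_{\phi_\lambda}u(x,\lambda\xi)$ by a smooth cutoff $\widetilde\chi(x)$, takes the Fourier transform in $x$, Taylor-expands $\widetilde\chi(x)$ about $y$, and iterates), but the central estimate in your main region does not close. For fixed $x$, the kernel $\eta'\mapsto G(x,\lambda(\xi-\eta'))$ is a bump of height $\sim\lambda^{-n/4}$ and width $\sim\lambda^{-1/2}$ in $\eta'$, so $\|G(x,\lambda\cdot)\|_{L^1_{\eta'}}\sim\lambda^{-3n/4}$. Feeding this through Minkowski in $x$, Young in $\eta'$, and Cauchy--Schwarz in $x$ over $K$ gives, for the contribution of $K\times V$,
\begin{equation*}
\Bigl\|\lambda^n\!\int_K\!\int_V G(x,\lambda(\xi-\eta'))\,W_{\phi_\lambda}u(x,\lambda\eta')\,d\eta'\,dx\Bigr\|^2_{L^2_\xi(V')}
\;\lesssim\; \lambda^{n/2}\int_V\!\int_K|W_{\phi_\lambda}u(x,\lambda\eta')|^2\,dx\,d\eta',
\end{equation*}
with a parasitic factor $\lambda^{n/2}$ that destroys the match with the hypothesis integrand. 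The proposed refinement ``split at scale $|\xi-\eta'|\sim\lambda^{-1/4}$'' does not help, since the kernel is already concentrated at scale $\lambda^{-1/2}\ll\lambda^{-1/4}$, and Cauchy--Schwarz in $\eta'$ is even worse ($\lambda^{n}$). The loss is structural: $G(x,\lambda(\xi-\eta'))$ carries an oscillatory factor $e^{-i\lambda x\cdot(\xi-\eta')}$ in $x$, and it is exactly the cancellation from this $x$-oscillation that makes the inversion $L^2$-bounded uniformly in $\lambda$ (one checks $\|\mathcal F[\chi u](\lambda\cdot)\|^2_{L^2}$ and $\iint|W_{\phi_\lambda}u(x,\lambda\eta')|^2\,dx\,d\eta'$ both scale like $\lambda^{-n}$). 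Any estimate that takes $|G|$ in absolute value, as Young/Cauchy--Schwarz does, throws this cancellation away.

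The paper avoids this by never leaving $L^2_x$: it replaces $\mathbf 1_K(x)$ by a smooth $\widetilde\chi(x)$ and uses Plancherel in $x$, which is lossless. The price is that $\widetilde\chi(x)$ must then be Taylor-expanded about $y$ to factor off $\mathcal F[\widetilde\chi\chi^2u]$: the zeroth-order term yields the exact product $I_1=\mathcal F[\phi_\lambda](\eta)\mathcal F[\widetilde\chi\chi^2u](\eta+\lambda\xi)$ with an $O(1)$ lower bound, while the higher Taylor terms $I_2$ gain only $\lambda^{-1/2}$ and reproduce an object of the same type with $\widetilde\chi$ replaced by its derivatives and the ball slightly enlarged. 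Since $u\in\mathcal S'$ only gives polynomial bounds, a single $\lambda^{-1/2}$ is not enough; the inequality must be iterated $N>4s+4k$ times, as in \eqref{repeat}--\eqref{repeat_2}, to beat the growth. Your proposal has no analogue of either ingredient: the Plancherel-in-$x$ mechanism that makes the main constant $O(1)$, or the bootstrap that absorbs the non-rapidly-decaying correction terms it produces.
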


\begin{proof}
From the assumption (ii), there exist 
 $\phi\in \mathcal{S}(\re^n)\setminus{\{0\}}$, a neighborhood $K$ of
 $x_0$  and a neighborhood $V_1$ of $\xi_0$ such that 
    \begin{align}
     \label{assump_1}
     \int_1^\infty  \lambda^{n-1+2s}  \int_{V_1}\int_K 
     |W_{\phi_\lambda} u(x,\lambda\xi)|^2 dxd\xi d\lambda <\infty .
    \end{align}
Since $u\in\mathcal{S}'(\re^n)$, there exists $k \in \na$ such that 
 $\|\bra{\xi}^{-k}\mathcal{F}[\psi u](\xi)\|_{L^2(\re^n)}<\infty$
  for all $\psi\in C^\infty_0(\re^n)$.
So, Proposition \ref{iikae_1} yields that there exists a neighborhood
 $V_2$ of $\xi_0$ such that 
    \begin{align}
     \label{assump_2}
     \int_1^\infty  \lambda^{n-1-2k}\| \mathcal{F}[\psi u]
     (\lambda\xi)\|^2_{L^2(V_2)}d\lambda <\infty.
    \end{align}
Let $\chi\in C^\infty_0(\re^n)$ with $\chi\equiv 1$ on $K$
 and let $\widetilde{\chi}\in C^\infty_0(\re^n)$ with
 $\widetilde{\chi} \equiv 1$ near $x_0$,
 $\overline{\mathstrut{\rm supp}\,\widetilde{\chi}}\subset K^\circ$ and 
 $0\le \widetilde{\chi}\le 1$.
Take $d>0$ satisfying $B(\xi_0,d)\subset V_1\cap V_2$.
From Proposition \ref{iikae_1} and \eqref{assump_1}, it is enough to
 show that there exist $\lambda_0\ge 1$ and 
 a neighborhood $V$ of $\xi_0$ such that  
    \begin{align}
    \label{result 2-1}
    \int_{V}|
    \mathcal{F}[\widetilde{\chi}\chi^2 u](\lambda \xi)|^2 d\xi
    \le C\int_{B(\xi_0,d)}\int_K
     |W_{\phi_\lambda}u(x,\lambda\xi)|^2 dxd\xi  +C'\lambda^{-M}
    \end{align}
for $\lambda\ge \lambda_0$ and $M>n+2s$.
Put $V'=B(\xi_0,\frac{d}{2})$.
Since $(A_1+A_2+A_3+A_4)^2\le 4(A^2_1+A^2_2+A^2_3+A^2_4)$ for 
 $A_1,A_2,A_3,A_4\in\re$, we have, by \eqref{1.2-5} and Plancherel's theorem, 
    \begin{align*}
    &\int_{V'}\int_K |W_{\phi_\lambda}u(x,\lambda\xi)|^2dxd\xi\notag\\
    &\quad \ge \int_{V'}\int_{\re^n} |\widetilde{\chi} (x)W_{\phi_\lambda}
    u(x,\lambda\xi)|^2dxd\xi \notag \\
    &\quad =\int_{V'}\int_{\re^n} \Big|
    \mathcal{F}_{x\to\eta}\Big[\widetilde{\chi}(x)
    W_{\phi_\lambda}[\chi^2 u](x,\lambda\xi)+
    \widetilde{\chi}(x)W_{\phi_\lambda}[(1-\chi^2)u](x,\lambda\xi)\Big]
    (\eta)\Big|^2 d\eta d\xi\notag \\
    &\quad =\int_{V'}\int_{\re^n}|I_1+I_2+I_3+I_4|^2d\eta d\xi\notag\\
    &\quad \ge \int_{V'}\int_{\re^n}\left( \dfrac{1}{4}|I_1|^2
     -|I_2|^2 -|I_3|^2 -|I_4|^2\right)d\eta d\xi,
    \end{align*}
 where $I_1, I_2$ and $I_3$ are defined by \eqref{1.2-7}, 
 \eqref{1.2-8} and \eqref{1.2-9} respectively, and
    \begin{align*}
    I_4=\mathcal{F}_{x\to\eta}\big[\widetilde{\chi}(x)W_{\phi_\lambda}
    [(1-\chi^2)u](x,\lambda\xi)\big] (\eta).
    \end{align*}
Thus, 
    \begin{multline}
    \label{I1-4}
    \dfrac{1}{4}\int_{V'}\int_{\re^n}|I_1|^2d\eta d\xi
    \le \int_{V'}\int_K |W_{\phi_\lambda}u(x,\lambda\xi)|^2dxd\xi
    +\int_{V'}\int_{\re^n}|I_2|^2d\eta d\xi\\
    +\int_{V'}\int_{\re^n}|I_3|^2d\eta d\xi
    +\int_{V'}\int_{\re^n}|I_4|^2d\eta d\xi.
    \end{multline}

Let $N\in \na$ with $N>4s+4k$, $\delta=\frac{d}{2(2N-1)}$.
Since
   $ I_1=\mathcal{F}[\phi_\lambda](\eta)
    \mathcal{F}[\widetilde{\chi}\chi^2 u](\eta+\lambda\xi)$,
 we have, by change of variables and Fubini's theorem,
    \begin{align*}
    \int_{V'}\int_{\re^n} |I_1|^2 d\eta d\xi
    &\ge \int_{V'}\int_{B(0,\delta\lambda^{3/4})} |I_1|^2 d\eta d\xi\\
    &=\int_{B(0,\delta\lambda^{3/4})} |\mathcal{F}[\phi_\lambda](\eta)|^2 \int_{V'} 
    |\mathcal{F}[\widetilde{\chi}\chi^2 u](\eta +\lambda\xi)|^2 d\xi
    d\eta \notag\\
    &=\int_{B(0,\delta\lambda^{3/4})}|\mathcal{F}[\phi_\lambda](\eta)|^2 
    \int_{\Omega_{\lambda,\eta}}
    |\mathcal{F}[\widetilde{\chi}\chi^2 u](\lambda\xi)|^2 d\xi
    d\eta \notag\\
    &\ge \int_{B(0,\delta\lambda^{3/4})}|\mathcal{F}[\phi_\lambda](\eta)|^2
    \int_{B(\xi_0,\frac{d}{2}-\delta)}|\mathcal{F}[\widetilde{\chi}
    \chi^2 u](\lambda\xi)|^2 d\xi,
    \end{align*}
 where $\Omega_{\lambda,\eta}=\{ \xi+\frac{\eta}{\lambda}; \xi\in V'\}$.
We note that if $\xi\in V'$, $\eta\in B(0,\delta\lambda^{3/4})$
 and $\lambda\ge1$ then 
 $B(\xi_0,\frac{d}{2}-\delta)\subset \Omega_{\lambda,\eta}$.
Since
    \begin{align*}
    \int_{B(0,\delta\lambda^{3/4})} |\mathcal{F}[\phi_\lambda](\eta)|^2 d\eta
        \longrightarrow \|\phi\|^2_{L^2}\quad\quad (\lambda \rightarrow \infty),
    \end{align*}
there exists $\lambda_0\ge 1$ such that 
    \begin{align*}
    \int_{B(0,\delta\lambda^{3/4})} |\mathcal{F}[\phi_\lambda](\eta)|^2 
    d\eta \ge \dfrac{1}{2}\|\phi\|^2_{L^2}>0
    \end{align*}
 for $\lambda\ge \lambda_0$. 
Therefore we obtain 
    \begin{align}
    \label{2-1 I1}
    \dfrac{1}{2}\|\phi\|^2_{L^2}
    \int_{B(\xi_0,\frac{d}{2}-\delta)}|\mathcal{F}[\widetilde{\chi}
    \chi^2 u](\lambda\xi)|^2 d\xi\le \int_{V'}\int_{\re^n} 
    |I_1|^2 d\eta d\xi    
    \end{align}
 for $\lambda\ge \lambda_0$.
Let $M>n+2s$.
In the similar calculation as \eqref{1.2-16} and \eqref{1.2-4},
 we have
    \begin{align}
    \label{2-1 I2}
    \int_{V'}\int_{\re^n}|I_2|^2d\eta d\xi
    & \le C\bigg( \lambda^{-\frac{1}{2}} \sum_{1\le |\alpha|\le L}
    \| \mathcal{F}
    [(\partial^\alpha\widetilde{\chi})\chi u](\lambda\xi) 
    \|^2_{L^2(B(\xi_0,\frac{d}{2}+\delta))}+
     \lambda^{-M}\bigg)
    \end{align}
 and 
    \begin{align}
    \label{2-1 I4}
    \int_{V'}\int_{\re^n}|I_4|^2 d\eta d\xi \le C\lambda^{-M},
    \end{align}
 respectively.
If we take $L$ sufficiently large, then
in the same way as \eqref{1.2-15} we have
    \begin{align}
    \label{2-1 I3}
    \int_{V'}\int_{\re^n} |I_3|^2d\eta d\xi
     \le C \lambda^{-M}.
     \end{align}
By \eqref{I1-4}, \eqref{2-1 I1}, \eqref{2-1 I2}, \eqref{2-1 I4}
 and \eqref{2-1 I3}, we obtain 
 \begin{multline}
    \label{repeat}
    \int_{B(\xi_0,\frac{d}{2}-\delta)}|\mathcal{F}[\widetilde{\chi}
    \chi^2 u](\lambda\xi)|^2 d\xi 
    \le C\bigg( \int_{B(\xi_0,\frac{d}{2})}
    \int_K|W_{\phi_\lambda}u(x,\lambda\xi)|^2dxd\xi\\
    +\lambda^{-\frac{1}{2}} \sum_{1\le |\alpha|\le L}
    \int_{B(\xi_0,\frac{d}{2}+\delta)} |\mathcal{F}
    [(\partial^\alpha\widetilde{\chi})\chi u](\lambda\xi) 
    |^2 d\xi+\lambda^{-M}\bigg).
 \end{multline}
In the left hand side of above inequality, 
substituting 
$|\mathcal{F}[\widetilde{\chi} \chi^2 u](\lambda\xi)|$
and $B(\xi_0,\frac{d}{2}-\delta)$ for 
$|\mathcal{F}[(\partial^\alpha \widetilde{\chi})\chi^2 u](\lambda\xi)|$
and $B(\xi_0,\frac{d}{2})$, we have
\begin{multline}
    \label{repeat_2}
    \int_{B(\xi_0,\frac{d}{2}+\delta)}|\mathcal{F}[(\partial^\alpha
    \widetilde{\chi})
    \chi^2 u](\lambda\xi)|^2 d\xi 
    \le C\bigg( \int_{B(\xi_0,\frac{d}{2}+2\delta)}
    \int_K|W_{\phi_\lambda}u(x,\lambda\xi)|^2dxd\xi\\
    +\lambda^{-\frac{1}{2}} \sum_{\substack{1\le |\alpha_2|\le L}}
    \int_{B(\xi_0,\frac{d}{2}+3\delta)} |\mathcal{F}
    [(\partial^{\alpha+\alpha_2}\widetilde{\chi})\chi^2 u](\lambda\xi) 
    |^2 d\xi+\lambda^{-M}\bigg).
 \end{multline}
Combining \eqref{repeat} and \eqref{repeat_2}, we have
    \begin{multline}
    \int_{B(\xi_0,\frac{d}{2}-\delta)}|\mathcal{F}[\widetilde{\chi}
    \chi^2 u](\lambda\xi)|^2 d\xi
    \le C\Bigg( \int_{B(\xi_0,\frac{d}{2}+2\delta)}\int_K|W_{\phi_\lambda}
    u(x,\lambda\xi)|^2dxd\xi\\
    \qquad +\lambda^{-1}
    \sum_{\substack{1\le|\alpha_1|\le L\\ 1\le |\alpha_2|\le L}}
    \int_{B(\xi_0,\frac{d}{2}+3\delta)} 
    |\mathcal{F}[(\partial^{\alpha_1+\alpha_2}\widetilde{\chi})\chi^2 u]
    (\lambda\xi)|^2 d\xi+\lambda^{-M}\bigg).
    \end{multline}
Continuing in this fashion, we obtain 
    \begin{align*}
    &\int_{B(\xi_0,\frac{d}{2}-\delta)}|\mathcal{F}[\widetilde{\chi}
    \chi^2 u](\lambda\xi)|^2 d\xi\\
    &\le C \Bigg( \int_{B(\xi_0,\frac{d}{2}+N\delta)}\int_K
    |W_{\phi_\lambda}u(x,\lambda\xi)|^2dxd\xi\\
    &\qquad+\lambda^{-\frac{N}{2}}
    \sum_{\substack{1\le|\alpha_j|\le L\\ 1\le j\le N}}
    \int_{B(\xi_0,\frac{d}{2}+(2N-1)\delta)} 
    |\mathcal{F}[(\partial^{\alpha_1+\cdots +\alpha_N}_x\chi)\chi^2 u]
    (\lambda\xi)|^2 d\xi+\lambda^{-M}\bigg).
    \end{align*}
Since $B(\xi_0,\frac{d}{2}+N\delta)\subset 
 B(\xi_0,\frac{d}{2}+(2N-1)\delta)\subset B(\xi_0,d)$, $M>2n+2s-1$ and 
 $N>4s+4k$, 
 we obtain \eqref{result 2-1} by \eqref{assump_2} as $V=
B(\xi_0,\frac{d}{2}-\delta)$.
\end{proof}

\section{Proof of Theorem \ref{C_inf}}
Trivially, (iii) yields (ii). So, in order to prove Theorem \ref{C_inf}, 
we show that (i) implies (iii) and (ii) implies (i). 

\begin{proposition}
Under the same assumption as in  Theorem \ref{C_inf},
(i) implies (iii).
\end{proposition}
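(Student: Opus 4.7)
My plan is to mirror the argument of Proposition \ref{1-3}, simplified to the pointwise setting. Since here we want only pointwise control of $|W_{\phi_\lambda}u(x,\lambda\xi)|$ rather than its $L^2$-in-$x$ norm, the Plancherel-in-$x$ detour of Section 3 is not strictly necessary; a direct Fourier representation in the $y$-variable suffices. (Equivalently, one may run the $I_1,I_2,I_3$ decomposition of Section 3 verbatim and take $\|\cdot\|_{L^1_\eta}$ in place of $\|\cdot\|_{L^2_\eta}$ at the end; the bookkeeping is essentially the same.)

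\emph{Setup and decomposition.} By hypothesis (i) and Proposition \ref{2.1} there exist $\chi\in C_0^\infty(\re^n)$ with $\chi\equiv 1$ near $x_0$ and a conic neighborhood $\Gamma_0$ of $\xi_0$ on which $|\mathcal{F}[\chi u](\xi)|\le C_N(1+|\xi|)^{-N}$; since $\chi^2 u=\chi\cdot(\chi u)$ and convolution with $\mathcal{F}[\chi]\in\mathcal{S}$ preserves rapid decay on any slightly smaller cone, the same bound holds (after shrinking $\Gamma_0$) for $\mathcal{F}[\chi^2 u]$. Fix a compact neighborhood $K$ of $x_0$ with $\overline{K}\subset\{\chi\equiv 1\}^\circ$ and a strictly smaller conic neighborhood $\Gamma\subset\Gamma_0$ of $\xi_0$, chosen so that for some $c_0>0$ one has $B(\xi,c_0|\xi|)\subset\Gamma_0$ for every $\xi\in\Gamma$. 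Write
\begin{equation*}
W_{\phi_\lambda}u(x,\lambda\xi) = W_{\phi_\lambda}[\chi^2 u](x,\lambda\xi) + W_{\phi_\lambda}[(1-\chi^2)u](x,\lambda\xi);
\end{equation*}
Lemma \ref{structure} bounds the second term by $C_{\mu,\phi}\lambda^{-\mu}\bra{\xi}^m$, which is $\le C_{N,a,\phi}\lambda^{-N}$ on $K\times\{|\xi|\le a\}$ for every $N$.

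\emph{Main term.} Parseval's identity in $y$ gives
\begin{equation*}
W_{\phi_\lambda}[\chi^2 u](x,\lambda\xi) = (2\pi)^{-n}\int_{\re^n}\mathcal{F}[\chi^2 u](\lambda\xi+\eta)\,e^{ix\cdot\eta}\,\overline{\mathcal{F}[\phi_\lambda](\eta)}\,d\eta,
\end{equation*}
which I split at $|\eta|=\delta\lambda^{3/4}$ with $\delta=c_0/(2a)$. On $\{|\eta|\le\delta\lambda^{3/4}\}$, for $\lambda$ beyond some $\lambda_0(a)$ the shift keeps $\lambda\xi+\eta\in\Gamma_0$ with $|\lambda\xi+\eta|\ge \lambda/(2a)$, so $|\mathcal{F}[\chi^2 u](\lambda\xi+\eta)|\le C_{N,a}\lambda^{-N}$, while the remaining integral is $\le C_\phi\lambda^{n/4}$; this region contributes $O(\lambda^{n/4-N})$. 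On $\{|\eta|>\delta\lambda^{3/4}\}$, the scale $|\lambda^{-1/2}\eta|\ge \delta\lambda^{1/4}$ is large, so the Schwartz decay of $\mathcal{F}[\phi]$ (quantified as in Lemma \ref{hodai 1}) combined with the tempered bound $|\mathcal{F}[\chi^2 u](\zeta)|\le C(1+|\zeta|)^m$ (valid since $\chi^2 u\in\mathcal{E}'$) yields a bound of the form $C_{\phi,a}\lambda^{\kappa(m,n)-M/4}$ for every $M$.

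\emph{Main obstacle.} The principal bookkeeping is to coordinate $\Gamma\subset\Gamma_0$, $a$, $\delta$, and $\lambda_0$ so that the shifted frequency $\lambda\xi+\eta$ remains in $\Gamma_0$ with size $\gtrsim\lambda/a$ on the small-$\eta$ region, while $\delta\lambda^{3/4}\ge\lambda^{1/2}$ on the complement to activate the Schwartz tail of $\mathcal{F}[\phi]$. Once these thresholds are fixed, taking $M$ and $N$ large yields the required $|W_{\phi_\lambda}u(x,\lambda\xi)|\le C_{N,a,\phi}\lambda^{-N}$ uniformly on $K\times(\Gamma\cap\{a^{-1}\le|\xi|\le a\})$. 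The dependence on $\phi$ enters only through Schwartz seminorms of $\phi$, so $\Gamma$ and $K$ are indeed independent of $\phi$, as demanded by (iii).
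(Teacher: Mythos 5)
Your proof is correct, but it takes a genuinely different route from the paper. The paper's own argument is indirect: it first uses the fundamental theorem of calculus (together with the commutation formulas $\partial_{x_j}W_{\phi_\lambda}u=\lambda^{1/2}W_{(\partial_j\phi)_\lambda}u$ and $\partial_{\xi_j}W_{\phi_\lambda}u(x,\lambda\xi)=-i\lambda W_{\phi_\lambda}[yu](x,\lambda\xi)$) and Schwarz's inequality to dominate the pointwise quantity $|\mathbf{1}_{K'}\mathbf{1}_{V'}W_{\phi_\lambda}u|$ by a finite sum of $L^2_{x,\xi}$-norms, and then invokes the already-established estimate \eqref{1.2-2} from Section 3 (which in turn rests on the $I_1,I_2,I_3$ Taylor decomposition and Lemmas \ref{structure}--\ref{remainder}). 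You instead estimate $W_{\phi_\lambda}[\chi^2 u]$ pointwise via the Parseval identity
$W_{\phi_\lambda}[\chi^2 u](x,\lambda\xi)=(2\pi)^{-n}\int e^{ix\cdot\eta}\,\overline{\widehat{\phi_\lambda}(\eta)}\,\mathcal{F}[\chi^2 u](\lambda\xi+\eta)\,d\eta$,
split at $|\eta|=\delta\lambda^{3/4}$, and use only two inputs: rapid decay of $\mathcal{F}[\chi^2 u]$ on the cone $\Gamma_0$ for the small-$\eta$ part, and the Schwartz tail of $\widehat{\phi}$ (as in Lemma \ref{hodai 1}) against the polynomial bound on $\mathcal{F}[\chi^2 u]$ for the large-$\eta$ part. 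This avoids the Taylor-expansion machinery and Lemma \ref{remainder} altogether, so it is more elementary and self-contained; the paper's route has the virtue of reusing estimates that were needed anyway for Theorem \ref{characterization}, at the price of a heavier-than-necessary argument for the pointwise case. Two small points worth making explicit in a final write-up: the extension from $\lambda\ge\lambda_0(a)$ to all $\lambda\ge 1$ follows by compactness and continuity of $W_{\phi_\lambda}u$ in $(x,\xi,\lambda)$; and the cone-shrinking fact you quote for $\mathcal{F}[\chi^2 u]=(2\pi)^{-n}\mathcal{F}[\chi]*\mathcal{F}[\chi u]$ needs the global polynomial bound on $\mathcal{F}[\chi u]$ (available since $\chi u\in\mathcal{E}'$) to control the contribution from outside $\Gamma_0$. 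With these spelled out, the argument is complete.
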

\begin{proof}
By (i), there exists $\chi \in C_0^\infty (\re^n)$ with $\chi\equiv 1$
 near $x_0$ and a conic neighborhood $\Gamma$ of $\xi_0$ such that 
 for all $N\in \na$ there exists $C_N>0$ such that
    \begin{align}
    \label{1.1-1}
     |\mathcal{F}[\chi u](\xi)|\le C_N (1+|\xi|)^{-N}
    \end{align}
 for $\xi\in \Gamma$.
Thus, it follows that for all $N\in\na$ and $a\ge 1$ there exists
 $C_{N,a}>0$ such that
    \begin{align}
     \label{1.1-2}
     |\mathcal{F}[\chi u](\lambda \xi)|\le C_N (1+\lambda|\xi|)^{-N}
     \le C_{N,a}\lambda^{-N}
    \end{align}
 for $\lambda \ge 1$ and $\xi \in \Gamma$ with $a^{-1}\le |\xi|\le a$.

Let $K$ and $K'$ be a neighborhood of $x_0$ satisfying
 $\overline{K}\subset \{ x\in \re^n|\chi (x)=1\}^\circ$ and 
 $\overline{K'}\subset K^\circ$.
Put $V_a=\{ \xi \in \Gamma \,|\, a^{-1}\le |\xi|\le a\}$.
Take $d>0$ satisfying $B(\xi_0,2d)\subset V_a$ and 
 put $V=B(\xi_0,d)$ and $V'=B(\xi_0,\frac{d}{2})$.
Let $\chi_1 \in C^\infty_0(K)$ satisfy $\chi_1 \equiv 1$ on $K'$ and let
 $\chi_2 \in C^\infty_0(V)$ satisfy $\chi_2 \equiv 1$ on $V'$.
Then, by the fundamental theorem of calculus, we have
    \begin{align*}
     &\quad |{\bf 1}_{K'}(x){\bf 1}_{V'}(\xi)W_{\phi_\lambda}u(x,\lambda\xi)|\\
    &\le\left| \int_{-\infty}^{\xi_n} \cdots
             \int_{-\infty}^{\xi_1}\int_{-\infty}^{x_n}\cdots 
             \int_{-\infty}^{x_1}  
      \partial_{\xi_1\cdots \xi_n} \partial_{x_1\cdots x_n}
     \Big\{   \chi_1(x)\chi_2(\xi)W_{\phi_\lambda}u(x,\lambda\xi)  \Big\}
     dx_1\cdots dx_nd\xi_1\cdots d\xi_n\right|\\
     &\le \iint_{\re^{2n}}\Big|\partial_{\xi_1\cdots \xi_n} \partial_{x_1\cdots x_n}
     \big\{   \chi_1(x)\chi_2(\xi)W_{\phi_\lambda}u(x,\lambda\xi)
     \big\}\Big| dxd\xi\\  
     &\le \sum_{0\le \alpha\le \tau}~\sum_{0\le \beta\le \tau}
      C_{\alpha,\beta}\int_{V}\int_{K}
      |  \partial^\alpha_x \partial^\beta_\xi  [W_{\phi_\lambda}u(x,\lambda\xi)]|
     dxd\xi,
    \end{align*}
 where $\tau =(1,1,\ldots, 1)\in\re^n$.
Since
    \begin{align*}
      &\partial_{x_j}(W_{\phi_\lambda}u(x,\lambda\xi))
       =\lambda^{\frac{1}{2}}W_{(\partial_{x_j} \phi)_\lambda}u(x,\lambda\xi),\\
      &\partial_{\xi_j}(W_{\phi_\lambda}u(x,\lambda\xi))
       =-i\lambda W_{\phi_\lambda} [yu](x,\lambda\xi)
    \end{align*}
and Schwarz's inequality, we have
    \begin{multline}
     \label{1.1-3}
      |{\bf 1}_{K'}(x){\bf 1}_{V'}(\xi)W_{\phi_\lambda}u(x,\lambda\xi)|\\
      \le \sum_{0\le \alpha\le \tau}~\sum_{0\le \beta\le \tau}
      C_{\alpha,\beta} \lambda^{\frac{|\alpha|}{2}+|\beta|}
      \left( \int_{V}\int_{K}
      | W_{(\partial^\alpha_x \phi)_\lambda}[y^\beta u](x,\lambda\xi)|^2
      dxd\xi \right)^{\frac{1}{2}}.
    \end{multline}
By the similar way that we got \eqref{1.2-2},
for all $N\in\na$ there exists $C_N>0$ such that
    \begin{align}
    \label{1.1-4}
     \int_{V}\int_{K}
     | W_{(\partial^\alpha_x \phi)_\lambda}[y^\beta u](x,\lambda\xi)|^2
     dxd\xi \le C_N\left(  \int_{V_a}|\mathcal{F}[\chi u](\lambda\xi)|^2 d\xi
     +\lambda^{-N}\right).
    \end{align}
From \eqref{1.1-2}, \eqref{1.1-3} and \eqref{1.1-4}, we obtain the desired result.
\end{proof}

\bigskip

\begin{proposition}
Under the same assumption as in  Theorem \ref{C_inf},
(ii) implies (i).
\end{proposition}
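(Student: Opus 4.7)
The plan is to adapt the proof of Proposition \ref{2-1} to the pointwise setting. By Proposition \ref{2.1}, (i) is equivalent to producing a cutoff $\chi\in C_0^\infty(\re^n)$ with $\chi\equiv 1$ near $x_0$ and a neighborhood $V'$ of $\xi_0$ such that for every $N\in\na$, $|\mathcal{F}[\chi u](\lambda\xi)|\le C_N(1+\lambda|\xi|)^{-N}$ for $\xi\in V'$ and $\lambda\ge 1$. Choose $\chi\equiv 1$ on $K$, a second cutoff $\widetilde{\chi}\in C_0^\infty(\re^n)$ with $\widetilde{\chi}\equiv 1$ near $x_0$ and $\overline{\mathrm{supp}\,\widetilde{\chi}}\subset K^\circ$, and pick $d>0$ so that $B(\xi_0,2d)\subset\Gamma\cap\{a^{-1}\le|\xi|\le a\}$ for an appropriate $a\ge 1$. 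Since $\widetilde{\chi}\chi^2\equiv 1$ near $x_0$, it suffices to prove pointwise rapid decay of $\mathcal{F}[\widetilde{\chi}\chi^2 u](\lambda\xi)$ in some $V'\subset V^\circ$, where $V=B(\xi_0,d/2-\delta)$ is the neighborhood furnished by the iterative step of Proposition \ref{2-1}.

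Running the iteration scheme of Proposition \ref{2-1} verbatim---decomposing $W_{\phi_\lambda}u=W_{\phi_\lambda}[\chi^2 u]+W_{\phi_\lambda}[(1-\chi^2)u]$, applying the Taylor expansion \eqref{1.2-5}, taking $\mathcal{F}_{x\to\eta}$ of $\widetilde{\chi}(x)W_{\phi_\lambda}u(x,\lambda\xi)$, and controlling $I_1,I_2,I_3,I_4$ via Lemmas \ref{structure} and \ref{remainder}---produces the inequality \eqref{result 2-1}. The only change from the Sobolev setting is the input bound on the wave-packet side: hypothesis (ii) gives
\[
\int_{B(\xi_0,d)}\int_K |W_{\phi_\lambda}u(x,\lambda\xi)|^2\,dx\,d\xi\le C_{N_p}\lambda^{-2N_p}
\]
for every $N_p\in\na$, which when substituted into the right-hand side of \eqref{result 2-1} (with $M$ taken large) yields, for every $M'\in\na$,
\[
\int_{V}|\mathcal{F}[\widetilde{\chi}\chi^2 u](\lambda\xi)|^2\,d\xi\le C_{M'}\lambda^{-M'}\qquad(\lambda\ge\lambda_0).
\]
The neighborhood $V$ is fixed once $d$ and the iteration depth are chosen, independently of $M'$.

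Finally, I convert this $L^2$-decay into a pointwise estimate on $V'\subset V^\circ$. Setting $f=\widetilde{\chi}\chi^2 u\in\mathcal{E}'(\re^n)$, the structure theorem provides $|\hat f(\eta)|+|\nabla\hat f(\eta)|\le C\langle\eta\rangle^{k_0}$ for a fixed $k_0\in\na$. For $\xi^*\in V'$ with $r_0=\mathrm{dist}(\xi^*,\partial V)>0$ and any $r_1\in(0,\lambda r_0]$, a standard mean-value inequality yields
\[
|\hat f(\lambda\xi^*)|^2\le \frac{C}{r_1^n}\int_{B(\lambda\xi^*,r_1)}|\hat f(\eta)|^2\,d\eta + Cr_1^2\sup_{B(\lambda\xi^*,r_1)}|\nabla\hat f|^2.
\]
The first term is bounded by $Cr_1^{-n}\lambda^{n}C_{M'}\lambda^{-M'}$ after the change of variables $\eta=\lambda\xi$, the second by $Cr_1^2\lambda^{2k_0}$; choosing $r_1=\lambda^{-(M'+2k_0-n)/(n+2)}$ makes both terms bounded by $C_{M''}\lambda^{-2M''}$ for any preassigned $M''$ provided $M'$ is taken sufficiently large. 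An application of Proposition \ref{2.1} then delivers (i). The main technical point is checking that the neighborhood $V$ produced by the iteration of Proposition \ref{2-1} does not depend on the target decay rate $M'$; this holds because the radius $d/2-\delta$ is fixed by the iteration depth alone, after which the pointwise hypothesis absorbs arbitrarily large powers of $\lambda^{-1}$ coming from both the wave-packet term and the $\lambda^{-M}$ remainder in \eqref{result 2-1}.
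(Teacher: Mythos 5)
Your argument is correct and follows the paper's skeleton: apply the iteration scheme from Proposition~\ref{2-1} to land on an inequality of the form \eqref{result 2-1}, feed in the pointwise decay of $W_{\phi_\lambda}u$ from hypothesis~(ii) to get rapid $L^2$-decay of $\mathcal{F}[\widetilde{\chi}\chi^2 u](\lambda\,\cdot)$ over a fixed neighborhood $V$, and then invoke Proposition~\ref{2.1}. You also correctly flag the one point that needs care, namely that $V$ is pinned down by the iteration depth (which depends only on the order $k$ of $u$ and the geometry, not on the target decay rate), so a single $V$ serves all $M'$.

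The genuine divergence is in the final $L^2$-to-pointwise conversion. The paper runs a fundamental-theorem-of-calculus argument in the $\xi$ variables: it bounds $\mathbf{1}_{V_3}(\xi)|\widehat{\chi u}(\lambda\xi)|$ by $\sum_{0\le\alpha\le\tau}C_\alpha\lambda^{|\alpha|}\bigl(\int_{V_2}|\mathcal{F}[y^\alpha\chi u](\lambda\xi)|^2\,d\xi\bigr)^{1/2}$, which requires rapid $L^2$-decay not only for $\chi u$ but also for each $y^\alpha\chi u$ with $\alpha\le\tau$; those in turn have to be extracted from the same iteration by replacing $\widetilde{\chi}$ with $x^\alpha\widetilde{\chi}$. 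Your mean-value argument,
\[
|\hat f(\lambda\xi^*)|^2\le \frac{C}{r_1^n}\int_{B(\lambda\xi^*,r_1)}|\hat f|^2 + Cr_1^2\sup_{B(\lambda\xi^*,r_1)}|\nabla\hat f|^2,
\]
with the radius $r_1\sim\lambda^{-(M'+2k_0-n)/(n+2)}$ optimized, only needs the rapid $L^2$-decay of $\hat f$ itself together with the crude polynomial bound $|\nabla\hat f(\eta)|\lesssim\langle\eta\rangle^{k_0}$, which is automatic for $f\in\mathcal{E}'(\re^n)$. This sidesteps having to re-invoke the iteration for the derivative terms and is a self-contained interpolation argument; the trade-off is the extra bookkeeping with $r_1$, and the need to separately dispose of the bounded range $1\le\lambda\le\lambda_1$ (where the constraint $r_1\le\lambda r_0$ may fail), which is harmless since $\hat f$ is bounded there. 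Both routes are sound; yours is a little cleaner on the input side.
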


\begin{proof}

From the assumption (ii), there exist 
$\phi\in \mathcal{S}(\re^n)\setminus{\{0\}}$,
a neighborhood $K$ of $x_0$ and a conic neighborhood $\Gamma$ of
 $\xi_0$ such that  for all $N\in\na$ and $a\ge 1$ there exists
 a constant $C_{N,a}$ satisfying 
    \begin{align}
    \label{4.2_1}
    |W_{\phi_\lambda}u(x,\lambda\xi)|\le C_{N,a}\lambda^{-N}
    \end{align}
 for all $\lambda\ge 1$, $x\in K$ and $\xi\in\Gamma$ with 
 $a^{-1}\le |\xi|\le a$.
Take $b> 1$ satisfying that 
$\xi_0\in \{ \xi\in\Gamma \,|\, b^{-1}\le |\xi|\le b \}$.
Put $V_1=\{ \xi\in\Gamma \,|\, b^{-1}\le |\xi|\le b \}$.
By the similar way that we got \eqref{result 2-1},
 there exist $\chi \in C^\infty_0(\re^n)$ with $\chi\equiv 1$ near $x_0$
and a neighborhood $V_2$ of $\xi_0$ 
    \begin{align}
    \label{4.2_2}
    \int_{V_2}|
    \mathcal{F}[\chi u](\lambda \xi)|^2 d\xi
    \le C_N\left( \int_{V_1}\int_K
     |W_{\phi_\lambda}u(x,\lambda\xi)|^2 dxd\xi  +\lambda^{-N}\right)
    \end{align}
for all $N\in\na$.
If $\xi\in V_2$ and $\lambda \ge 1$ then simple calculation yields
 that $\lambda^{-N}\le C_N(1+\lambda |\xi|)^{-N}$.
Thus, from \eqref{4.2_1} and \eqref{4.2_2}, we have
    \begin{align}
    \label{4.2_3}
    \int_{V_2}|
    \mathcal{F}[\chi u](\lambda \xi)|^2 d\xi
    \le C_N(1+\lambda |\xi|)^{-N}
    \end{align}
for $N\in\na$.
Let $V_3$ be a neighborhood of $\xi_0$ satisfying 
$\overline{V_3}\subset V^\circ_2$.
Using Proposition \ref{2.1}, it is enough to show that there
 exists a neighborhood
 $V_3$ of $\xi_0$ such that
    \begin{align}
    \label{4.2_4}
     {\bf 1}_{V_3}(\xi)|\widehat{\chi u}(\lambda\xi)|
     \le C \int_{V_2}| \mathcal{F}[\chi u](\lambda \xi)|^2 d\xi.
    \end{align}

Let $\chi_1 \in C^\infty_0(V_2)$ satisfy $\chi_1 \equiv 1$ on $V_3$.
Then, by the fundamental theorem of calculus, we have
    \begin{align*}
     &\quad |{\bf 1}_{V_3}(\xi)\widehat{\chi u}(\lambda\xi)|\\
     &\le\left| \int_{-\infty}^{\xi_n} \cdots
             \int_{-\infty}^{\xi_1}\partial_{\xi_1\cdots \xi_n}
     \Big\{ \chi_1(\xi)\widehat{\chi u}(\lambda\xi)  \Big\}
     d\xi_1\cdots d\xi_n\right|\\
     &\le \int_{\re^{n}}\Big|\partial_{\xi_1\cdots \xi_n} 
     \big\{  \chi_1(\xi)
     \widehat{\chi u}(\lambda\xi) \big\}\Big| d\xi\\  
     &\le \sum_{0\le \alpha\le \tau}
      C_{\alpha}\int_{V_2}
      | \partial^\alpha_\xi\{\widehat{\chi u}(\lambda\xi)\}|d\xi,
    \end{align*}
 where $\tau =(1,1,\ldots, 1)\in\re^n$.
Since
    \begin{align*}
      &\partial_{\xi_j}\{\widehat{\chi u}(\lambda\xi)\}
       =-i\lambda \mathcal{F}[y\chi u](\lambda\xi)
    \end{align*}
and Schwarz's inequality, we have
    \begin{align}
      |{\bf 1}_{V_3}(\xi)\widehat{\chi u}(\lambda\xi)|
      \le \sum_{0\le \alpha\le \tau}
      C_{\alpha}\lambda^{2|\alpha|} \left( \int_{V_2}
| \mathcal{F}[y\chi u](\lambda\xi)|^2d\xi\right)^{\frac{1}{2}}
    \end{align}
Thus, we obtain the desired result.
\end{proof}

\appendix
\section{appendix}
{\bf Proof of Proposition \ref{2.1}.}
First, under the assumption (i), we show (ii).
From the assumption (i), we have
\begin{align*}
|\mathcal{F}[\chi u](\lambda\xi)|\le C_N(1+\lambda|\xi|)^{-N}
\end{align*}
for $\xi\in\Gamma$.
Let $V$ be a neighborhood of $\xi_0$ satisfying $V\subset \Gamma$.
If $\lambda\ge 1$ and $\xi\in V$ then $\lambda\xi\in \Gamma$.
Thus, we have (ii).

Conversely, we assume (ii).
Put $\Gamma_1=\{ \lambda\xi\,|\, \xi\in V, \lambda >0\}$ and 
$\Gamma_2=\{ \lambda\xi\,|\, \xi\in V, \lambda\ge 1\}$.
By the assumption, we have 
$${\bf 1}_{\Gamma_2}(\xi)|\mathcal{F}[\chi u](\xi)|\le C_N (1+|\xi|)^{-N}.$$

Let $\chi_1 \in C^\infty_0(\re^n)$ satisfy $\chi\equiv 1$ on 
$\Gamma_1\setminus\Gamma_2$.
Then, we have
\begin{align*}
{\bf 1}_{\Gamma_1\setminus\Gamma_2}(\xi)|\mathcal{F}[\chi u](\xi)|
\le |\chi_1(\xi)\mathcal{F}[\chi u](\xi)|
\le C_N (1+|\xi|)^{-N},
\end{align*}
where 
$C_N=\underset{\xi\in \Gamma_1\setminus\Gamma_2}{\rm sup}
|(1+|\xi|)^N \chi_1(\xi)\mathcal{F}[\chi u]|$.
Thus, we obtain the desired result.~\hfill $\square$\\

\bigskip

\noindent{\bf Proof of Proposition \ref{iikae_1}.}
First, we show (i) implies (ii).
Let $\Gamma$ be a conic neighborhood of $\xi_0$ satisfying \eqref{2.2_1}.
Take $\delta >0$ with $|\xi_0|-\delta >0$.
Let $V=\{\xi\in\Gamma ; |\xi_0|-\delta\le |\xi|\le |\xi_0|+\delta\}$,
 $S^{n-1}=\{ \xi\in\re^n ; |\xi|=1\}$ and $A=S^{n-1}\cap \Gamma$.
By change of variables $\xi=r\sigma$, where $r>0$ and $\sigma\in A$, we have
    \begin{align*}
    I&\equiv \int_1^\infty  \lambda^{n-1+2s} 
     \| \mathcal{F}[\chi u](\lambda\xi)\|^2_{L^2(V)}d\lambda\\
    &= \int^\infty_1 \lambda^{n-1+2s} 
     \int_{|\xi_0|-\delta}^{|\xi_0|+\delta}\int_{A}
     r^{n-1}
     |\mathcal{F}[{\chi u}](\lambda r\sigma )|^2 d\sigma
     drd\lambda\\
    & \le C
     \int_{|\xi_0|-\delta}^{|\xi_0|+\delta}
     \int^\infty_1 \int_{A} r (r\lambda)^{n-1+2s}  |\mathcal{F}
     [{\chi u}]( \lambda r\sigma )|^2 d\sigma d\lambda dr.
    \end{align*}
Again by change of variables $\lambda r=\lambda'$ and $r=r'$, we have 
    \begin{align*}
    I&\le C
     \int_{|\xi_0|-\delta}^{|\xi_0|+\delta}
     \int^\infty_{r} \int_{A}  \lambda^{n-1+2s} |\mathcal{F}
     [{\chi u}]( \lambda\sigma)|^2 d\sigma d\lambda dr\\
    & \le 2\delta C
     \int_{|\xi_0|-\delta}^\infty \int_{A}  
     \lambda^{n-1+2s} |\mathcal{F}
     [{\chi u}]( \lambda\sigma)|^2 d\sigma d\lambda \\
    & \le 2\delta C
     \int_0^\infty \int_{A}  \lambda^{n-1}
      (1+\lambda^2|\sigma|^2)^s  |\mathcal{F}
     [{\chi u}]( \lambda\sigma)|^2 d\sigma d\lambda .
    \end{align*}
By change of variables $\xi =\lambda\sigma$, we obtain
    \begin{align*}
     I&\le C \int_\Gamma \bra{\xi}^{2s}
     |\mathcal{F} [{\chi u}]( \xi)|^2 d\xi<\infty .
    \end{align*}
Therefore we obtain \eqref{2.2_2}.

Next, we show (ii) implies (i)
Let $V$ be a neighborhood of $x_0$ satisfying \eqref{2.2_2}.
Take $d>0$ such that $|\xi_0|-d>0$ and $B(\xi_0,d)\subset V$ 
 and put $\Gamma_d=\{ \lambda\xi\,|\, \xi\in B(\xi_0,d),\lambda >0\}$.
Let $\Gamma$ be a conic neighborhood of $\xi_0$ satisfying 
 $\overline{\Gamma}\subset \Gamma_d$.
We divide $\|\bra{\xi}^s\mathcal{F}[\chi u](\xi)\|_{L^2(\Gamma)}$ 
 into two parts:
    \begin{align}
    \label{2.2_3}
    \| \bra{\xi}^s\mathcal{F}[\chi u](\xi)\|^2_{L^2(\Gamma)}
    &=\| \bra{\xi}^s\mathcal{F}[\chi u](\xi)
    \|^2_{L^2(\Gamma\cap \{|\xi|\le 1\})}
    +\| \bra{\xi}^s\mathcal{F}[\chi u](\xi)
    \|^2_{L^2(\Gamma\cap \{|\xi|\ge 1\})} \notag\\
    &\equiv I_1+I_2.
    \end{align}
Since $u\in \mathcal{S}'(\re^n)$, there exists $k\in\na$ such that 
 $\bra{\xi}^{-k}\mathcal{F}[\chi u](\xi)\in L^2(\re^n)$.
Thus, we have
    \begin{align}
    \label{2.2_4}
    I_1&= \int_{\Gamma \cap \{|\xi|\le 1\}}\bra{\xi}^{2s+2k}\bra{\xi}^{-2k}
     |\mathcal{F}[\chi u](\xi)|^2 d\xi\notag\\
    &\le 2^{|s|+k}\int_{\re^n}\bra{\xi}^{-2k}
     |\mathcal{F}[\chi u](\xi)|^2 d\xi \le C.
    \end{align}
Let $A'=\{ \xi \in \Gamma ; |\xi|=1\}$ and take $d'>0$
 satisfying 
 $$V'=\{ \xi\in\Gamma ; |\xi_0|-d'\le |\xi|\le |\xi_0|+d'\} \subset
 B(\xi_0,d).$$ 
By change of variables $\xi=\lambda\sigma$,  we have
    \begin{align*}
    I_2&=\int_1^\infty \int_{A'} \lambda^{n-1}(1+\lambda^2)^s
      |\mathcal{F} [\chi u](\lambda\sigma)|^2 d\sigma d\lambda \\
    &\le C \int_1^\infty \lambda^{n-1+2s}\int_{A'} |
      \mathcal{F} [\chi u](\lambda\sigma)|^2 d\sigma d\lambda \\
    &=\dfrac{C}{2\delta'}\int_{|\xi_0|-d'}^{|\xi_0|+d'}
     \int_1^\infty \lambda^{n-1+2s}\int_{A'} |
      \mathcal{F} [\chi u](\lambda\sigma)|^2 d\sigma d\lambda d\theta.
    \end{align*}
Again by change of variables $\lambda =\lambda'\theta'$ and
 $\theta =\theta'$, we have
    \begin{align*}
    I_2&\le C \int_{|\xi_0|-d'}^{|\xi_0|+d'}
     \int_{\frac{1}{\theta}}^\infty \int_{A'}
     \lambda^{n-1+2s}\theta^{n+2s} |
     \mathcal{F} [\chi u](\lambda\theta\sigma)|^2 
    d\sigma d\lambda d\theta\\
    &\le C'
     \int_{\frac{1}{|\xi_0|+d'}}^\infty \lambda^{n-1+2s}
      \int_{|\xi_0|-d'}^{|\xi_0|+d'}\int_{A'}
      \theta^{n-1} | \mathcal{F} [\chi u]
     (\lambda\theta\sigma)|^2 d\sigma d\theta d\lambda.
    \end{align*}
By change of variables $\xi=\theta\sigma$,
 we have
    \begin{align}
    \label{2.2_5}
    I_2&\le C
    \int_{\frac{1}{|\xi_0|+d'}}^\infty  \lambda^{n-1+2s} 
    \| \mathcal{F}[\chi u](\lambda\xi)\|^2_{L^2(V')}d\lambda \notag\\
    &\le C \bigg(
     \int_1^\infty  \lambda^{n-1+2s} 
    \| \mathcal{F}[\chi u](\lambda\xi)\|^2_{L^2(V')}d\lambda \notag\\
    &\qquad \qquad \qquad \qquad 
    +\bigg| \int_{\frac{1}{|\xi_0|+d'}}^1  \lambda^{n-1+2s} 
    \|\mathcal{F}[\chi u](\lambda\xi)\|^2_{L^2(V')}d\lambda\bigg|\bigg) .
    \end{align}
Since $\bra{\xi}^{-k}\mathcal{F}[\chi u](\xi)\in L^2(\re^n)$, we have 
    \begin{multline}
    \label{2.2_6}
     \bigg| \int_{\frac{1}{|\xi_0|+d'}}^1  \lambda^{n-1+2s} 
    \|\mathcal{F}[\chi u](\lambda\xi)\|^2_{L^2(V')}d\lambda\bigg|\\
    \le C \bigg| \int_{\frac{1}{|\xi_0|+d'}}^1  \lambda^{-1+2s} 
    (1+\lambda^{2})^{k}
    \|\bra{\xi}^{-k}\mathcal{F}[\chi
     u](\xi)\|^2_{L^2(\re^n)}d\lambda\bigg|
    \le C'
    \end{multline}
From \eqref{2.2_3}, \eqref{2.2_4}, \eqref{2.2_5} and \eqref{2.2_6}, 
 we obtain \eqref{2.2_1}.~\hfill $\square$\\

\noindent{\bf Proof of Proposition \ref{change cut-off}.}
Let $\Gamma_d=\{ \lambda\xi\,|\,\xi\in B(\xi_0,d), \lambda>0\}$ and
let $\Gamma_1$ be a conic neighborhood of $\xi_0$ such that 
 $\overline{\Gamma}_1\subset \Gamma_d$.
As shown in the proof of Proposition \ref{iikae_1}, it follows that 
    \begin{align}
    \label{2.3_2}
    \| \bra{\xi}^s\mathcal{F}[\chi u](\xi)\|^2_{L^2(\Gamma_1)}
    \le C \int_1^\infty \lambda^{n-1+2s} \| \mathcal{F}[\chi 
    u](\lambda\xi) \|^2_{L^2(B(\xi_0,d))}d\lambda +C'.
    \end{align}
Let $\Gamma_2$ be a conic neighborhood of $\xi_0$ satisfying 
 $\overline{\Gamma}_2\subset \Gamma_1$.
It is enough to show that 
    \begin{align}
    \label{2.3_3}
    \| \bra{\xi}^s\mathcal{F}[\zeta\chi u](\xi)\|_{L^2(\Gamma_2)}
    \le C\| \bra{\xi}^s\mathcal{F}[\chi u](\xi)\|_{L^2(\Gamma_1)}+C'.
    \end{align}
In fact, Proposition \ref{iikae_1} yields that 
    \begin{align}
    \label{2.3_4}
    \int_1^\infty \lambda^{n-1+2s} \| \mathcal{F}[\zeta \chi 
    u](\lambda\xi) \|^2_{L^2(V)}d\lambda 
    \le C_3 \| \bra{\xi}^s\mathcal{F}[\zeta\chi u](\xi)\|^2_{L^2(\Gamma_2)},
    \end{align}
 where $V=\{ \xi\in\Gamma_2 \,|\, |\xi_0|-\delta\le |\xi|\le
 |\xi_0|+\delta\}$ and $\delta$ is a sufficiently small constant.
Thus, combining \eqref{2.3_2}, \eqref{2.3_3} and \eqref{2.3_4}, 
 we obtain \eqref{2.3_1}.

We divide $\bra{\xi}^s\mathcal{F}[\zeta \chi u](\xi)$ into 
 two parts:
    \begin{align*}
    \bra{\xi}^s\mathcal{F}[\zeta \chi u](\xi)
    &=\bra{\xi}^s \int_{\Gamma_1}\mathcal{F}[\zeta] (\xi-\eta)
    \mathcal{F}[\chi u](\eta )d\eta +\bra{\xi}^s \int_{\Gamma^c_1}
    \mathcal{F}[\zeta] (\xi-\eta) \mathcal{F}[\chi u](\eta )d\eta \\
    &\equiv I_1+I_2.
    \end{align*}
For any $\xi ,\eta \in\re^n$ and $s\in\re$,
 $\bra{\xi}^s\bra{\xi-\eta}^{-|s|}\bra{\eta}^{-s}\le C$ holds.
Thus, Young's inequality yields
    \begin{align*}
    \|I_1\|_{L^2(\Gamma_2)}
    &\le \bigg\|\int_{\re^n} \dfrac{\bra{\xi}^s}{\bra{\xi
    -\eta}^{|s|}\bra{\eta}^s}\bra{\xi -\eta}^{|s|} |\mathcal{F}[\zeta]
     (\xi -\eta )| \mbox{\boldmath $1$}_{\Gamma_1}(\eta)
    \bra{\eta}^s|\mathcal{F}[\chi u](\eta)|d\eta\bigg\|_{L^2(\re^n)}\notag\\
    &\le C \| \bra{\xi}^{|s|}\mathcal{F}[\zeta] (\xi) \|_{L^1(\re^n)}
    \|\bra{\xi}^s \mathcal{F}[\chi u](\xi)\|_{L^2(\Gamma_1)}\notag\\
    &\le C' \|\bra{\xi}^s \mathcal{F}[\chi u](\xi)\|_{L^2(\Gamma_1)}.
    \end{align*}
Since $u\in\mathcal{S}'(\re^n)$, there exists $k\in\na$ such that 
 $\bra{\xi}^{-k}\mathcal{F}[\chi u](\xi)\in L^2(\re^n)$.
If $\eta \notin\Gamma^c_1$ and $\xi\in\Gamma_2$, then 
 $|\xi -\eta|\ge C|\xi|$ and $|\xi -\eta |\ge C|\eta |$.
So we have, by Young's inequality,
    \begin{align*}
    \|I_2\|_{L^2(\Gamma_2)}
    &\le C \bigg\| \bra{\xi}^s  \int_{\Gamma^c_1}
    |\mathcal{F}[\zeta](\xi -\eta)|
    \frac{\bra{\xi-\eta}^{|s|+k}}{\bra{\xi}^{|s|} \bra{\eta}^{k}} 
    |\mathcal{F}[\chi u](\eta )|d\eta  \bigg\|_{L^2(\re^n)} \notag\\
    &\le C' \| \bra{\xi}^{|s|+k}\mathcal{F} [\zeta] (\xi) 
     \|_{L^1(\re^n)} \bigg\|
    \frac{\mathcal{F}[\chi u](\xi)}{\langle\xi\rangle^{k}} 
   \bigg\|_{L^2(\re^n)}\le C''.
    \end{align*}
 Therefore,  we obtain \eqref{2.3_3}.~\hfill $\square$

%
%




\end{document}